\theoremstyle{plain}
\newtheorem{theorem}{Theorem}[section]
\newtheorem{lemma}{Lemma}[section]
\newtheorem{prop}{Proposition}[section]
\newtheorem{conjecture}{Conjecture}[section]
\theoremstyle{definition}
\newtheorem{defin}{Definition}[section]
\newtheorem{remark}{Remark}[section]
\newtheorem*{acknowledgement}{Acknowledgements}
\newcommand{\comment}[1]{}
\begin{document}
\title{On the Dirichlet problem in the plane with polynomial data}
\author{Akaki Tikaradze}

\email{ Akaki.Tikaradze@utoledo.edu}
\address{University of Toledo, Department of Mathematics \& Statistics, 
Toledo, OH 43606, USA}

\begin{abstract}
Let $\Omega\subset\mathbb{C}$ be a bounded domain such that 
there exists an algebraic harmonic function of degree two vanishing on the boundary of $\Omega.$
Then we show that the Khavinson-Shapiro conjecture holds for $\Omega:$ if
the Dirichlet problem on $\Omega$ with all polynomial boundary data
have polynomial solutions, then $\Omega$ must be an ellipse. We also prove that if there exists a rational function with a singularity in $\Omega$, such that the Dirichlet problem
for its restriction on $\partial\Omega$ along with all polynomial functions have rational solutions, then $\Omega$ must be a disc.
This generalizes a well-known result by Bell, Ebenfelt, Khavinson, and Shapiro. Our proofs are purely algebraic.
\end{abstract}

\maketitle

\section{Introduction}

Let $\Omega$ be a domain in $\mathbb{C}.$ Let $v\in C(\partial\Omega).$ Recall that solving the Dirichlet problem
on $\Omega$ with the boundary data $v$ amounts to finding $u\in C^2(\Omega)\cap C(\overline{\Omega}$), such that
$u$ is harmonic on $\Omega$ and $u|_{\partial\Omega}=v.$ Domains for which the Dirichlet problem with algebraic data admit algebraic
solutions have been of considerable interest for some time now.
It is of particular interest to consider domains for which the Dirichlet problem with any polynomial data has a polynomial solution.
This is known to be the case when $\Omega$ is an ellipse by a short and elegant argument of Fischer, which led Khavinson and Shapiro \cite{KS}
to make the following conjecture (they made the conjecture about bounded domains in $\mathbb{R}^n$, we restrict ourselves to $n=2$ case).

\begin{conjecture}[Khavinson-Shapiro conjecture]
Let $\Omega\subset\mathbb{C}$	 be a bounded domain
whose boundary consists of
finitely many non-intersecting Jordan curves. Suppose that 
the solution of the Dirichlet problem on $\Omega$ with every polynomial data
is again polynomial. Then $\Omega$ must be an ellipse.

\end{conjecture}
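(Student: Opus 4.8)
The plan is to turn the analytic hypothesis into a purely algebraic statement about a Fischer-type decomposition, collapse the degree of a defining polynomial to two by a graded dimension count, and then confront the one step that keeps this a conjecture rather than a theorem.

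First I would establish that $\partial\Omega$ is algebraic. Let $T$ be the linear operator sending polynomial boundary data to its harmonic polynomial solution. Applied to a single non-harmonic datum, say $v=\bar z$, it yields a harmonic polynomial $u=T(\bar z)$ agreeing with $\bar z$ on $\partial\Omega$, so the nonzero polynomial $P_0:=\bar z-u$ vanishes there. Then $P:=P_0\overline{P_0}=|\bar z-u|^2$ is a real polynomial vanishing on $\partial\Omega$, and I would take $P$ to be a real defining polynomial with nonvanishing gradient along the Jordan curves. Hence $\partial\Omega$ is the relevant real locus of an algebraic curve, of some degree $d=\deg P$.

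Next I would reformulate solvability. Since two harmonic polynomials agreeing on $\partial\Omega$ coincide by the maximum principle, and a polynomial vanishing on the smooth real curve $\{P=0\}$ is divisible by $P$, the hypothesis that every polynomial datum has a harmonic polynomial solution is equivalent to the direct-sum decomposition
\[ \mathbb{C}[z,\bar z]\;=\;\mathcal H\,\oplus\,P\,\mathbb{C}[z,\bar z], \qquad \mathcal H:=\ker\Delta,\quad \Delta=4\partial_z\partial_{\bar z}, \]
where $\mathcal H$ is the space of harmonic polynomials. Directness encodes uniqueness, surjectivity encodes solvability. The engine is now a graded dimension count: in degree $n\ge 1$ the homogeneous harmonic space is spanned by $z^n$ and $\bar z^n$, so $\dim\mathcal H_n=2$, while $\dim\mathbb{C}[z,\bar z]_n=n+1$. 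If $P$ were homogeneous of degree $d$, multiplication by $P$ is injective, so $(P\,\mathbb{C}[z,\bar z])_n$ has dimension $n-d+1$ for $n\ge d$, and the equality forces $2+(n-d+1)=n+1$, i.e. $d=2$. A real quadratic whose zero set bounds a bounded region is (after translation) definite, so $\Omega$ is an ellipse, and the decomposition does hold there by Fischer's classical argument recalled in the introduction. This settles the homogeneous version completely.

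The main obstacle — and the reason the conjecture remains open — is the reduction of a genuinely inhomogeneous $P$ to its top-degree form $P_d$. Passing to the associated graded under the degree filtration, the leading form of $Pq$ is $P_d$ times the leading form of $q$, so $\Gr(P\,\mathbb{C}[z,\bar z])=P_d\,\mathbb{C}[z,\bar z]$, while $\Gr(\mathcal H)=\mathcal H$; one then obtains only an \emph{injection} $\mathcal H\oplus P_d\,\mathbb{C}[z,\bar z]\hookrightarrow\mathbb{C}[z,\bar z]$, because the associated graded of a direct-sum decomposition of a filtered space need not stay surjective when leading terms cancel in $v=u+Pq$. Counting dimensions then yields $2+(n-d+1)\le n+1$, i.e. only $d\ge 2$; the reverse inequality $d\le 2$ is exactly surjectivity, equivalently the homogeneous Fischer decomposition for $P_d$, which can fail for degenerate leading forms (for instance $P_d$ a proper power or carrying a harmonic factor) in a manner compatible with the inhomogeneous decomposition. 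Ruling out every such exotic high-degree $P_d$ unconditionally is the unsolved core. This is precisely where a low-degree anchor helps: the degree-two algebraic harmonic function posited in the standing hypothesis of the present paper forces $P_d$ to be non-degenerate and thereby bypasses this residue, which is why the paper proves the conditional statement rather than the full conjecture.
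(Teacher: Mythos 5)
You were asked to prove a statement that the paper itself records as an open conjecture: the paper states explicitly that the Khavinson--Shapiro conjecture ``is still wide open'' and proves only conditional results (Theorem \ref{KH-main}, which assumes in addition that an algebraic harmonic function of degree two vanishes on $\partial\Omega$). Your attempt, to your credit, concedes this at the end: your argument is complete only when the defining polynomial $P$ is homogeneous, and the passage from a general $P$ to its leading form $P_d$ --- i.e.\ surjectivity of $\mathcal H\oplus P_d\,\mathbb{C}[z,\bar z]\to\mathbb{C}[z,\bar z]$, the Fischer decomposition for $P_d$ --- is exactly the unresolved core of the problem. So what you have is a correct reduction of the conjecture to a graded surjectivity statement, plus an accurate diagnosis of why that statement is hard, but not a proof; the gap you name is genuine and is the reason the statement remains a conjecture.

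Two further concrete points. First, your opening step contains an error: $\bar z$ is harmonic ($\Delta\bar z=0$), so by uniqueness $T(\bar z)=\bar z$ and your $P_0=\bar z-u$ is the zero polynomial. You need a genuinely non-harmonic datum; the standard choice is $z\bar z$, which yields a nonzero real polynomial $z\bar z-h(z)-\overline{h(z)}$ vanishing on $\partial\Omega$ --- precisely the polynomial the paper exploits in its proof of Theorem \ref{KH-main}. (Your subsequent claim that every polynomial vanishing on $\partial\Omega$ is divisible by $P$ also needs $P$ to be chosen as a generator of the real vanishing ideal, not merely as $P_0\overline{P_0}$, though this is a repairable technicality.) Second, note that your Fischer-decomposition and dimension-count framework is essentially the approach of Chamberland--Siegel and Render (\cite{CS}, \cite{R1}, \cite{R2}), whereas the paper's conditional theorems are obtained by a different, purely field-theoretic mechanism: if a finite-dimensional algebra $A$ over $F$ is a sum of two subfields, $A=F_1+F_2$, then $A=F_1$ or $A=F_2$ (Proposition \ref{key}). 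Under the paper's extra degree-two hypothesis this produces a polynomial of degree at most $2$ in each of $z,\bar z$, hence of total degree at most $2$, vanishing on $\partial\Omega$; that route sidesteps the graded surjectivity obstruction you identified rather than resolving it, which is consistent with your closing remark but is not the same mechanism you sketch (non-degeneracy of $P_d$ plays no role in the paper's argument).
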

At present, the Khavinson-Shapiro conjecture is still wide open.
See \cite{CS}, \cite{R1}, \cite{R2},  \cite{LR} for some partial results. 

In this paper we show the following.

\begin{theorem}\label{KH-main}

Let $\Omega\subset\mathbb{C}$ be a bounded domain so that $\partial\Omega$ has no isolated points.
Assume that there exist rational holomorphic functions $f, g\in \mathbb{C}(z)$ and a rational harmonic
function $r$, not all 0, such that 
$$r^4-4(f+\bar{g})r^2+(f-\bar{g})^2=0|_{\partial{\Omega}}.$$
Then the Khavinson-Shapiro conjecture holds for $\Omega.$

\end{theorem}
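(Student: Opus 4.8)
The plan is to convert polynomial solvability of the Dirichlet problem into a purely algebraic spanning condition on the coordinate ring of the boundary, and then to use the displayed identity to force that curve to be a conic. First I would clear denominators in $r^4-4(f+\overline g)r^2+(f-\overline g)^2$: since $r=h(z)+\overline{k(z)}$ is a rational harmonic function and $f,g$ are rational, the result is a genuine polynomial $\Phi(z,\overline z)$ that vanishes on $\partial\Omega$, and one checks $\Phi\neq 0$ using that $f,g,r$ are not all zero. As $\partial\Omega$ has no isolated points it is a one-dimensional set contained in the real algebraic curve $\{\Phi=0\}$; hence the Schwarz function $S$, characterized by $\overline z=S(z)$ on $\partial\Omega$, is algebraic, and the Zariski closure $C\subset\mathbb{C}^2$ of $\partial\Omega$ in coordinates $(z,w)$ (with $w$ standing for $\overline z$) is an affine algebraic curve with $\Phi\in I(C)$.

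Next I would translate the hypothesis. Over $\mathbb{C}$, a harmonic polynomial is exactly a sum $a(z)+b(\overline z)$ of a holomorphic and an antiholomorphic polynomial. If every polynomial datum $p$ admits such a polynomial harmonic solution $u=a(z)+b(\overline z)$, then $p-u$ vanishes on the infinite set $\partial\Omega$ and hence on all of $C$; therefore in the coordinate ring $\mathcal A=\mathbb{C}[z,w]/I(C)$ every class has the form $\overline a(z)+\overline b(w)$. In other words, $\mathcal A$ is the sum of the images of $\mathbb{C}[z]$ and of $\mathbb{C}[w]$. This spanning condition is the engine of the proof: it severely restricts the pole behaviour of $z$ and $w$ on the normalization of $C$, and in the quadratic case it reduces to the invertibility of a finite linear system, which recovers Fischer's result that an ellipse has the property and shows that among conics only ellipses do.

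The heart of the argument is to bound the degree of $C$. On $C$ the functions $f$, $\overline g=\tilde g(w)$ and the two summands of $r=h(z)+\overline{k(z)}$ all lie in the function field $\mathbb{C}(C)$, and the displayed quartic says that $r^2$ satisfies a quadratic relation over $\mathbb{C}(z,\overline g)$; this is the precise meaning of the phrase ``algebraic harmonic function of degree two'' from the abstract. I would leverage this degree-two datum, together with the spanning property of $\mathcal A$, to prove that $C$ has genus zero and that $[\mathbb{C}(C):\mathbb{C}(z)]\le 2$, so that $\deg C=2$ and $C$ is a conic. A conic whose real locus is bounded and free of isolated points is an ellipse, the degenerate and unbounded conics being excluded by boundedness and the residual nondegeneracy being exactly the Fischer invertibility above; hence $\Omega$ is an ellipse.

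The step I expect to be the main obstacle is exactly this degree reduction. The spanning condition by itself cannot bound $\deg C$ --- this is why the Khavinson--Shapiro conjecture remains open in general --- so all of the work must come from the specific quartic, and the task is to show that the combination $r^4-4(f+\overline g)r^2+(f-\overline g)^2$ is incompatible with the spanning of $\mathcal A$ by univariate images as soon as $\deg C>2$. Making this precise means analysing the interaction of the pole divisors of $z$ and $w$ with the algebraic element $r$ on the normalization of $C$, tracking the genus via Riemann--Roch, and handling the possible reducibility of $\{\Phi=0\}$ together with the location of the poles of $r$, $f$ and $g$ relative to $\Omega$. I expect the bounded and no-isolated-point hypotheses to be used precisely to control these pole locations and to pass from the complex curve $C$ back to the real boundary $\partial\Omega$.
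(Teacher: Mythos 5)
Your setup is sound and your reformulation of polynomial solvability as a spanning condition on the coordinate ring is exactly the right translation, but the proof has a genuine gap precisely where you yourself flag it: the degree reduction. You correctly observe that the spanning condition alone cannot bound the degree of the boundary curve and that all the work must come from the quartic, and then you only \emph{describe} what a proof would have to do (pole divisors on the normalization, genus via Riemann--Roch, reducibility of $\{\Phi=0\}$) without carrying any of it out. Since that step is the entire content of the theorem, what you have is a program, not a proof. The paper's route through this obstacle is quite different and far more elementary: it factors
$$r^4-4(f+\bar g)r^2+(f-\bar g)^2=\prod_{\varepsilon,\delta\in\{\pm1\}}\bigl(r+\varepsilon\sqrt f+\delta\sqrt{\bar g}\bigr),$$
so that $\partial\Omega=\cup_i\Gamma_i$ where on each $\Gamma_i$ a function of the form $F(z)-\overline{G(z)}$ vanishes, with $F,G$ algebraic of degree at most $2$ (writing $r=r_1(z)+\overline{r_2(z)}$, take $F=r_1+\varepsilon\sqrt f$ and $G=-r_2+\delta\sqrt{g}$ up to sign). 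Theorem \ref{algebraic} with $n=2$ then yields a nonzero $\phi_i\in\mathbb{C}[z,\bar z]$ of degree at most $2$ in each variable vanishing on $\Gamma_i$ up to finitely many points. The engine of that theorem is not birational geometry but Proposition \ref{key}: setting $h=F|_{\Gamma_i}=\overline{G}|_{\Gamma_i}$, both $z$ and $\bar z$ are algebraic over $\mathbb{C}(h)$, the spanning hypothesis gives $\mathbb{C}(h)[z]+\mathbb{C}(h)[\bar z]=\mathbb{C}(h)[z,\bar z]$, a finite-dimensional algebra, and the proposition forces $\bar z\in\mathbb{C}(h)[z]$ or $z\in\mathbb{C}(h)[\bar z]$, i.e.\ a relation $\sum_{j=0}^{2}p_j(z)\bar z^{\,j}=0$ on $\Gamma_i$. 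No normalization, genus, or Riemann--Roch appears anywhere.

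There is a second gap in your endgame. Even granting $[\mathbb{C}(C):\mathbb{C}(z)]\le 2$, you cannot conclude ``$\deg C=2$, hence a conic, hence an ellipse by boundedness.'' First, degree $2$ over $\mathbb{C}(z)$ bounds the degree in $\bar z$, not the total degree; the paper removes this discrepancy by taking a gcd with $z\bar z+h(z)+\overline{h(z)}$, which vanishes on $\partial\Omega$ because the Dirichlet problem for the data $|z|^2$ has a polynomial solution. Second, the Zariski closure of $\partial\Omega$ need not be irreducible: a polygon is a bounded domain with no isolated boundary points whose boundary lies on a union of lines, so lines (and pieces of parabolas and hyperbolas) are not ``excluded by boundedness.'' The paper handles this by showing that if some infinite piece of $\partial\Omega$ lies on an irreducible quadratic $\phi$, then $\phi$ divides $|z|^2+h$ with $h$ real harmonic, which forces $h$ to be quadratic and the whole boundary to lie on a single conic (hence an ellipse), and by invoking Render's theorem \cite{R2} to dispose of the remaining polygon case. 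Neither of these steps can be waved away, and the polygon case in particular genuinely requires an outside input.
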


The existence of $r, f, g$ as above is (essentially) equivalent to the existence of a a harmonic
algebraic function of degree 2 that vanishes on $\partial\Omega.$

We also consider the following algebraic generalization of the Khavinson-Shapiro conjecture for arbitrary fields.

\begin{conjecture}\label{KS}
Let $F$ be a field.
Let $f\in F[z, w]$ be a polynomial such that for any $\phi\in F[z, w]$
there exists $\phi'\in F[z, w],$ so that $\phi-f\phi'\in F[z]+F[w]$. Then either the total degree of $f$ is at most 2, or 
$f$ is linear in $z$ or $w.$

\end{conjecture}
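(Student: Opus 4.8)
The plan is to rephrase the hypothesis as a statement about Hilbert functions and then exhibit a grading in which the image of $F[z]+F[w]$ is forced to grow strictly more slowly than $F[z,w]/(f)$. Write $R=F[z,w]/(f)$ and let $V\subseteq R$ be the image of $F[z]+F[w]$; the hypothesis is exactly that $V=R$. Fix coprime positive weights $(\alpha,\beta)$ for $(z,w)$, let $\deg$ denote the associated weighted degree, and set $h_R(m)=\dim_F R_{\le m}$ and $h_V(m)=\dim_F V_{\le m}$ for the induced filtrations. Since $h_V\le h_R$ always, it suffices to produce one weighting for which $h_V(m)<h_R(m)$ for some $m$; this contradicts $V=R$.

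For the growth of $h_R$ I would pass to the associated graded ring. For a principal ideal the leading forms of $(f)$ are generated by the weighted leading form $\tilde f$, so $\Gr R\cong F[z,w]/(\tilde f)$, and hence $h_R(m)=\sum_{t\le m}\dim_F\bigl(F[z,w]/(\tilde f)\bigr)_t$. A short lattice-point count then gives $h_R(m)=\frac{D}{\alpha\beta}\,m+O(1)$, where $D=\deg\tilde f$. For $h_V$, the key observation is that the leading symbol of the image of any $p(z)+q(w)$ lies in the span of the symbols of the $z^i$ and $w^j$; provided $\tilde f$ is not a pure power of $z$ or of $w$, these symbols are nonzero, and that of $z^i$ occupies only weighted degrees in $\alpha\Z$ while that of $w^j$ occupies only $\beta\Z$. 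Hence $\dim_F\Gr_t V\le[\alpha\mid t]+[\beta\mid t]$, so $h_V(m)\le(\tfrac1\alpha+\tfrac1\beta)m+O(1)$. Comparing the two, non-surjectivity follows as soon as we can choose $(\alpha,\beta)$ with $\tilde f$ not a pure power and
\[
D=\deg\tilde f>\alpha+\beta .
\]

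I would organize the choice of weights by the top-degree form $f_N$, where $N=\deg f$ in the standard grading. If $f_N$ is not a scalar multiple of $z^N$ or of $w^N$, take $\alpha=\beta=1$, so $\tilde f=f_N$ is not a pure power and the inequality reads simply $N>2$; this settles every $f$ of total degree at least $3$ outside the two degenerate top forms. The remaining, genuinely delicate, case is $f_N=cz^N$ (the case $f_N=cw^N$ being symmetric). Here $f$ is monic of degree $N=\deg_z f\ge 3$ in $z$, and under the contrapositive hypothesis also $q:=\deg_w f\ge 2$. In this case I would take $(\alpha,\beta)$ to be the outward normal of the first edge of the Newton polygon emanating from the vertex $(N,0)$; any genuine edge consists of at least two monomials, so $\tilde f$ is automatically not a pure power, and the other endpoint $(k_0,q_0)$ of this edge satisfies $q_0\ge 1$ together with the crucial constraint $k_0+q_0<N$, because $f_N=cz^N$ is the unique monomial on the line $k+l=N$.

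The heart of the argument — and the step I expect to be the main obstacle — is verifying $D>\alpha+\beta$ for this Newton edge. Writing the edge relation $\beta q_0=\alpha(N-k_0)$ and $D=\alpha N$, the inequality reduces to $N(q_0-1)>q_0-k_0$. When $q_0\ge 2$ this follows from $q_0\le N-1$ (a consequence of $k_0+q_0<N$), and when $q_0=1$ it reduces to $k_0\ge 2$, which I would establish by a convexity argument: a Newton vertex of height $1$ lying close to the $w$-axis cannot be followed by a further vertex of height $\ge 2$ while preserving convexity of the hull, so under $q\ge 2$ and $N\ge 3$ the first edge out of $(N,0)$ necessarily reaches $q_0\ge 2$ or has $k_0\ge 2$. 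This is precisely where the excluded alternative of the statement appears: if instead $\deg_w f\le 1$, no such edge exists and the inequality genuinely fails, matching the ``linear in $z$ or $w$'' escape. Assembling the three cases shows that $V=R$ forces total degree at most $2$ or linearity in one variable, as claimed.
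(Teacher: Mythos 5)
Your argument has a fatal gap at exactly the step you call the ``key observation,'' and it is worth stressing first that the statement you are proving is stated in the paper as an open \emph{conjecture}: the paper proves it only for special families of polynomials (Theorem \ref{KS-theorem}), via the field-theoretic route of Lemma \ref{crucial} and Proposition \ref{key}, so any short argument like yours should be treated with maximal suspicion. The observation in question --- that the leading symbol in $\Gr R$ of the image of any harmonic polynomial $p(z)+q(w)$ lies in the span of the symbols of the monomials $z^i$ and $w^j$ --- is false. The associated graded of $V$ (with the filtration $V_{\le t}=V\cap R_{\le t}$ induced from $R$) is spanned by the symbols of \emph{all} elements of $V$, not by the symbols of a spanning set: a harmonic polynomial of high weighted degree can be congruent modulo $f$ to a polynomial of much lower degree, and its image then contributes a low-degree symbol that has nothing to do with pure powers. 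Concretely, take $f=z^3+w^3+zw$ and $\alpha=\beta=1$. Then $z^3+w^3\equiv -zw \pmod f$, so the class of $zw$ lies in $V\cap R_{\le 2}$; since $(f)$ contains no nonzero polynomial of degree $\le 2$, this class has degree exactly $2$ and its symbol is $zw$ itself, which is linearly independent of $z^2$ and $w^2$ modulo $(\tilde f)=(z^3+w^3)$. Hence $\dim_F \Gr_2 V\ge 3$, violating your bound $\dim_F\Gr_t V\le[\alpha\mid t]+[\beta\mid t]$ and with it the estimate $h_V(m)\le(\tfrac1\alpha+\tfrac1\beta)m+O(1)$.

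Moreover, the gap cannot be repaired by proving the observation only for polynomials $f$ satisfying the hypothesis, because there the argument becomes circular: if $V=R$, then $V\cap R_{\le t}=R_{\le t}$ for every $t$, so $\Gr V=\Gr R$ identically, and your claimed bound on $\dim_F\Gr_t V$ is literally equivalent to the contradiction you are trying to derive. In other words, the entire content of the conjecture is concentrated in the unproven ``key observation''; the Hilbert-function count, the lattice-point estimate, and the Newton-polygon case analysis (which, for what it is worth, looks essentially correct) are all downstream repackaging. The genuine difficulty of the conjecture is precisely to control how many linearly independent harmonic polynomials can collapse to low degree modulo $f$, and a filtration comparison of this kind assumes that control rather than establishing it. This is also why the paper proceeds entirely differently: it evaluates at a zero $(a,b)\in\bar{F}^2$ of $f$, notes that the KS property forces $F[a,b]=F[a]+F[b]$, and invokes Proposition \ref{key} to conclude $F[a]\subseteq F[b]$ or $F[b]\subseteq F[a]$ --- a method that yields only the partial results of Theorem \ref{KS-theorem}, not the full conjecture.
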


This conjecture over $\mathbb{C}$ can be seen as a complexified version of  [\cite{CS}, Conjecture4].
As proved by Render \cite{R2},  [\cite{CS}, Conjecture4] implies the Khavinson-Shapiro
conjecture (assuming that the boundary of the domain has no isolated points).

We show that it is enough to prove the above conjecture for number fields to conclude that it holds for any characteristic 0 field.
We also show that proving this conjecture over prime finite fields $\mathbb{F}_p$ for $p\gg 0$ implies its validity for all fields.

We prove the following

\begin{theorem}\label{KS-theorem}
Let $h_1, h_2\in F[t]$ be irreducible polynomials over a field $F$ and $\deg(h_1)\leq \deg(h_2).$
Assume that either  $\deg(h_1)$ does not divide $\deg(h_2),$\\ or $\deg(h_1)=\deg(h_2)$
and one of $F[t]/(h_1), F[t]/(h_2)$ is not a splitting field over $F.$ Then for any $\phi, \psi\in F[z, w],$
Conjecture \ref{KS} holds for $h_1(z)\phi+h_2(w)\psi.$ In particular, 
let $a, b$ be square-free  odd integers $|a|, |b|>1$, and assume that $n$ is not a power of $2.$ Then Conjecture \ref{KS} holds for
$(z^n-a)\phi+(w^n-b)\psi$ for any $\phi, \psi\in \mathbb{Q}(i)[z, w].$
\end{theorem}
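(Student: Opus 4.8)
The plan is to show that, under either of the stated hypotheses, the polynomial $f := h_1(z)\phi + h_2(w)\psi$ can never satisfy the defining property of Conjecture~\ref{KS}, so that the conclusion holds vacuously. First I would rephrase that property: $f$ satisfies the hypothesis of Conjecture~\ref{KS} precisely when $F[z,w] = (F[z]+F[w]) + (f)$ as $F$-vector spaces. My approach is to reduce this identity modulo the ideal of common zeros of $h_1,h_2$ and then run an elementary dimension count in the resulting finite-dimensional quotient, entirely avoiding any analysis of the behaviour of the curve $f=0$ at infinity.

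The crucial but trivial observation is that $(f)\subseteq I$, where $I := (h_1(z),h_2(w))$, since $f=h_1(z)\phi+h_2(w)\psi$ literally lies in $I$. Hence the spanning identity forces $F[z,w]=(F[z]+F[w])+I$, and reducing modulo $I$ gives $Q=\overline{F[z]}+\overline{F[w]}$, where $Q:=F[z,w]/I$ and bars denote images. Now $Q\cong A\otimes_F B$ with $A:=F[t]/(h_1)$ and $B:=F[t]/(h_2)$, so $\dim_F Q=d_1d_2$ where $d_i:=\deg h_i$; moreover the image of $F[z]$ is the coordinate subalgebra $A\otimes 1$ and that of $F[w]$ is $1\otimes B$, of dimensions $d_1$ and $d_2$.

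The heart of the matter is that these two subalgebras meet only in the scalars: if $a\otimes 1=1\otimes b$, then expanding $a,b$ in $F$-bases of $A,B$ containing $1$ and comparing coordinates in the product basis forces $a,b\in F$. Thus $\dim_F\bigl(A\otimes 1+1\otimes B\bigr)=d_1+d_2-1$, and the identity $Q=A\otimes 1+1\otimes B$ would require $d_1d_2=d_1+d_2-1$, i.e. $(d_1-1)(d_2-1)=0$, which is impossible as soon as $d_1,d_2\ge 2$. Finally I would check that each disjunct of the hypothesis guarantees $d_1,d_2\ge 2$: if $d_1\nmid d_2$ then $d_1\ge 2$ (since $1$ divides everything) and $d_2\ge d_1$; in the equal-degree case a residue field of degree $1$ is $F$ itself, which is trivially its own splitting field, so ``not a splitting field'' forces degree $\ge 2$. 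For the explicit example one needs only $\deg(z^n-a)=\deg(w^n-b)=n\ge 2$, which holds automatically once $n$ is not a power of $2$.

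I do not expect a serious obstacle in this count itself; the one point requiring care is the scalar-intersection claim, which is genuinely elementary. The real subtlety is reconciling this with the paper's framing: the reduction above uses only $d_1,d_2\ge 2$, and neither irreducibility nor the splitting-field condition, so I anticipate the author's finer hypotheses arise from a different route. Presumably that route analyses the places at infinity of the curve $f=0$, where the residue-field extensions and their splitting govern which principal parts are realizable by sums $p(z)+q(w)$, and where the divisibility and non-splitting conditions enter essentially; the genuine difficulty there is controlling ramification at infinity, which in the explicit example is exactly where the irreducibility of $z^n-a$ over $\mathbb{Q}(i)$ and the assumption that $n$ is not a power of $2$ would be used. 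The finite-grid argument sidesteps all of this.
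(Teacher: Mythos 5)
Your proof is correct, and it takes a genuinely different route from the paper's --- one that is both more elementary and strictly more general. The paper also proceeds by showing that $f=h_1(z)\phi+h_2(w)\psi$ can never satisfy the hypothesis of Conjecture \ref{KS}, but it does so by evaluating at a single common zero $(a,b)\in\bar{F}^2$ of $h_1,h_2$: the KS-property forces $F[a,b]=F[a]+F[b]$, and Proposition \ref{key} then forces one of the subfields $F[a],F[b]$ to contain the other (this is Lemma \ref{crucial}). The paper's hypotheses --- irreducibility together with either $\deg(h_1)\nmid\deg(h_2)$, or equal degrees and a non-splitting residue field --- are exactly what is needed to produce roots $a,b$ with neither $F[a]\subseteq F[b]$ nor $F[b]\subseteq F[a]$. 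Your reduction modulo the full ideal $I=(h_1(z),h_2(w))$ and the dimension count $\dim_F(A\otimes 1+1\otimes B)=d_1+d_2-1<d_1d_2$ works instead with the whole zero-dimensional quotient $A\otimes_F B$ (all $d_1d_2$ common zeros, with their multiplicities and residue fields) rather than one geometric point, which is why you need nothing beyond $d_1,d_2\ge 2$ --- not even irreducibility of $h_1,h_2$. This is strictly stronger: for instance $h_1=z^2+1$, $h_2=w^2+1$ over $\mathbb{Q}$ is untouchable by the paper's method, since every pair of roots generates the same field $\mathbb{Q}(i)$ and Lemma \ref{crucial} yields no contradiction, whereas your count ($3<4$) disposes of it; likewise, in the $\mathbb{Q}(i)$ example you need neither Eisenstein's criterion nor the cyclotomic degree computation, only $n\ge 2$. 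The one inaccuracy is your closing speculation about the origin of the paper's finer hypotheses: they have nothing to do with places at infinity or ramification of the curve $f=0$; they arise solely because the paper's single-point evaluation loses exactly the information your tensor-product count retains when all the residue fields coincide.
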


After polynomials, the next natural choice to consider for the solvability of the Dirichlet problem
is the class of  rational functions. It is well-known and  easy to check that if
$\Omega$ is a disk, then for any $v\in C(\partial\Omega)$ such that $v$ is a restriction of a rational function, the Dirichlet
problem with the data $v$ has a rational solution. Thus, in the spirit of the Khavinson-Shapiro conjecture, it is natural to ask
whether this property characterizes disks. This question was settled affirmatively
by the following result of Bell, Ebenfelt, Khavinson, and Shapiro.

\begin{theorem}[\cite{BEKS}, Theorem 2]
Let $\Omega\subset\mathbb{C}$	 be a bounded domain
whose boundary consists of
finitely many non-intersecting Jordan curves and let $a\in\Omega.$ Suppose that 
the solution of the Dirichlet problem on $\Omega$ with every polynomial data
and $\frac{1}{z-a}|_{\partial\Omega}$ is rational. Then $\Omega$ is a disk.
\end{theorem}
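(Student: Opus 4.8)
The plan is to recast the problem in terms of the \emph{Schwarz function} of $\partial\Omega$ and then to read off the disk from a singularity count; the easy direction (a disk has the stated property) is classical, so I only address the implication that the property forces $\Omega$ to be a disk. First I would use the hypothesis that every polynomial datum has a rational solution to conclude that $\partial\Omega$ is real-algebraic and carries a Schwarz function $S$, i.e. a function meromorphic on $\Omega$, continuous up to the boundary, with $S(z)=\bar z$ on $\partial\Omega$. Concretely, writing the boundary as $\{Q(z,\bar z)=0\}$ for a real polynomial $Q$ and replacing $\bar z$ by an independent variable $w$, the relation $Q(z,S(z))=0$ defines $S$ as an algebraic function; the polynomial-data hypothesis, together with the earlier algebraic results, controls its degree and single-valuedness.

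Next I would translate rational solvability into an identity of meromorphic functions. If $u=f(z)+\overline{g(z)}$ is a rational harmonic solution with boundary data $v$, then $f,g$ are rational and holomorphic on $\overline{\Omega}$ (their poles lie outside $\overline{\Omega}$, since $u$ is harmonic on $\Omega$), and on $\partial\Omega$ we have $f(z)+\overline{g(z)}=v$. Substituting $\bar z=S(z)$ and continuing analytically gives $f(z)+g^{*}(S(z))=v(z,S(z))$, where $g^{*}$ is the rational function obtained from $g$ by conjugating its coefficients. Applying this to $v=\frac{1}{z-a}$ yields
$$f(z)+g^{*}(S(z))=\frac{1}{z-a}.$$
Since $f$ is holomorphic on $\Omega$, the function $g^{*}(S(z))$ is meromorphic on $\Omega$ with \emph{exactly one} pole there, a simple pole at $z=a$.

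From this single-pole condition I would force $S$ to be a M\"obius transformation. A pole of $g^{*}\circ S$ in $\Omega$ arises either where $S$ takes a finite pole-value of $g^{*}$ or where $S(z)=\infty$; because $g^{*}$ treats the point at infinity uniformly, a simple pole at $a$ together with regularity at every other point of $\Omega$ forces $S$ to have a single simple pole in $\Omega$, at $a$, and to attain each value at most once there. Combined with the degree control coming from the polynomial-data hypothesis, this makes $S$ a degree-one rational map $S(z)=\frac{\alpha z+\beta}{\gamma z+\delta}$. The equation $\bar z=S(z)$ then describes $\partial\Omega$ as $\{z:(\gamma z+\delta)\bar z=\alpha z+\beta\}$, a circle or a line; boundedness of $\Omega$ rules out the line and the single simple pole rules out extra boundary components, so $\partial\Omega$ is a single circle and $\Omega$ is a disk.

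I expect the main obstacle to be the passage from a \emph{local} single-pole count to the \emph{global} conclusion that $S$ is M\"obius: a priori the Schwarz function may be multivalued or of higher degree, and one must rule out configurations in which a higher-degree $S$ or a proper rational $g^{*}$ conspire to produce just one simple pole while the boundary is not a circle. Handling this is exactly where the full strength of the polynomial-data hypothesis---rather than the single datum $\frac{1}{z-a}$---must enter, by pinning down the degree and single-valuedness of $S$ through the algebraic quotient structure developed earlier in the paper.
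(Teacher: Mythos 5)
Your proposal reproduces, in outline, the complex-analytic strategy of the original BEKS paper rather than anything that can be completed in the space you allot to it, and it stands or falls on a step you flag yourself but never fill. The gap is the very first claim: that the hypotheses supply a Schwarz function $S$ that is \emph{single-valued and meromorphic on all of} $\Omega$. What the polynomial-data hypothesis yields cheaply (apply it to the datum $z\bar z$) is only that $\partial\Omega$ lies on a real-algebraic curve $\{Q(z,\bar z)=0\}$; the relation $Q(z,S(z))=0$ then defines $S$ merely as a branch of a multivalued algebraic function near non-singular boundary points. Its continuation into the interior can branch, and the assertion that it extends meromorphically to all of $\Omega$ with its only pole at $a$ is essentially the assertion that $\Omega$ is a one-node quadrature domain --- which is the heart of what must be proved, not an available input. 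Saying that ``the polynomial-data hypothesis, together with the earlier algebraic results, controls its degree and single-valuedness'' names this difficulty without resolving it; in \cite{BEKS} this is exactly where the nontrivial complex analysis is spent. There is a second gap at the end: even granting $S$ meromorphic on $\Omega$ with a single simple pole, $S$ is a priori defined only on $\Omega$, so ``one pole, hence a degree-one rational map'' conflates meromorphic-on-$\Omega$ with rational. That a one-node quadrature domain is a disk is a genuine theorem (Aharonov--Shapiro), proved for instance by an argument-principle count of the solutions of $S(z)=c$ using the boundary values $S=\bar z$; the injectivity of $S$ and the exclusion of extra boundary components must come out of such a count, not by fiat.

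It is worth seeing how the paper avoids this entire circle of ideas. Its proof is purely algebraic: the rational solution $u$ for the datum $1/(z-a)$ gives a nonzero rational harmonic function vanishing on $\partial\Omega$, written $f-\bar g$ with $f,g$ rational; setting $h=f|_{\partial\Omega}=\bar g|_{\partial\Omega}$ and $F=\mathbb{C}(h)$, one works in the algebra of functions on $\partial\Omega$ modulo finite sets, where $z$ and $\bar z$ are algebraic over $F$, so $F[z]$ and $F[\bar z]$ are fields. The polynomial-data hypothesis, applied monomial by monomial, gives $F[z,\bar z]=F[z]+F[\bar z]$, and a short field-theoretic lemma (a finite-dimensional algebra that is the sum of two subfields equals one of them) forces $\bar z\in F[z]$ or $z\in F[\bar z]$. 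Since here $\mathbb{C}(f,z)=\mathbb{C}(z)$, this produces a polynomial relation of degree at most one in each of $z$ and $\bar z$ holding on $\partial\Omega$ minus a finite set, i.e.\ the equation of a circle or a line; boundedness and the Jordan-curve hypothesis then give the disk. If you wish to salvage the Schwarz-function route, you must prove the quadrature-domain statement from scratch, which is a substantially harder road than this algebra.
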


The proof given in \cite{BEKS} relies on some rather nontrivial complex analysis.
Under the assumption that $\Omega$ is simply connected, then \cite{BEKS} shows a stronger result: if the Dirichlet problem on $\Omega$
for $z\bar{z}|_{\partial\Omega}, z^2\bar{z}|_{\partial\Omega}, z^3|_{\partial\Omega}$ ,and $\frac{1}{z-a}|_{\partial\Omega}$ (for some $a\in\Omega$) admits
rational solutions, then $\Omega$ is a disk.

We prove the following generalization of the above theorem to arbitrary (possibly unbounded) domains, 
and what is more important replacing
the rational function $\frac{1}{z-a}$ with an arbitrary rational function that has a singularity in the domain.

Recall that the upper half plane $\Omega=\lbrace z| \text{Im}(z)>0\rbrace$ also has the property that
the Dirichlet problem with any rational data admits a rational solution. Our result shows that a disk (or its complement)
and a right half plane are the only such domains (up to removing finitely many points).

\begin{theorem}\label{main}
Let $\Omega\subset \mathbb{C}$ be a domain, such
that the Dirichlet problem on $\Omega$ with any polynomial data has a rational solution.
Assume moreover that  there exists a rational harmonic function $\phi$ with a singularity in $\Omega,$ such that 
the Dirichlet problem on $\Omega$ with the data
$\phi|_{\partial\Omega}$
admits a rational solution. Then $\partial\Omega$ except for finitely many points is either a subset of a  circle or a line.
\end{theorem}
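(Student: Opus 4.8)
The plan is to encode everything through the Schwarz function of the boundary. On any real-analytic arc $\gamma\subset\partial\Omega$ there is a holomorphic function $S$, defined near $\gamma$, with $\bar z=S(z)$ for $z\in\gamma$. If $B(z)=\sum b_k z^k$ is holomorphic, then $\overline{B(z)}=B^*(\bar z)$, where $B^*$ has the conjugate coefficients, so a rational harmonic function $u=A(z)+\overline{B(z)}$ restricts to $\gamma$ as $A(z)+B^*(S(z))$, a function of $z$ alone. Matching a rational solution to its boundary data thus becomes an identity of (a priori multivalued) algebraic functions of $z$, which I can manipulate in the function field $\mathbb{C}(z)(S)$. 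Throughout I keep in mind that every solution function is \emph{holomorphic on $\Omega$}, since the solutions are harmonic there.

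First I would translate the two hypotheses. Applying the polynomial-solvability hypothesis to the data $z\bar z$ and $z^2\bar z$ yields rational $A_i,B_i$, holomorphic inside $\Omega$, with $zS=A_1+B_1^*(S)$ and $z^2S=A_2+B_2^*(S)$ on $\gamma$. Applying the singular hypothesis to $\phi=P+\overline{Q}$ and to its rational solution $A+\overline{B}$ gives an identity $C(z)=D^*(S(z))$ with $C,D$ rational, in which the interior singularity of $\phi$ (absent from the solution, which is holomorphic on $\Omega$) persists as a genuine singularity at some $a\in\Omega$. The single equation $C=D^*(S)$ already shows that $S$ is a root of $D^*(w)-C(z)\in\mathbb{C}(z)[w]$, so $S$ is algebraic over $\mathbb{C}(z)$ and $\partial\Omega$ lies, away from finitely many points, on the real algebraic curve $D^*(\bar z)=C(z)$. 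Conjugating $\bar z=S(z)$ gives the reality relation $S^*(S(z))=z$, so $S$ and $S^*$ are mutually inverse algebraic correspondences of the same degree.

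The heart of the matter is to upgrade ``algebraic'' to ``Möbius,'' i.e. $\deg_{\mathbb{C}(z)}S=1$. This is precisely the step separating the disk from the ellipse: the ellipse already satisfies the polynomial hypothesis, but its Schwarz function is genuinely two-valued with branch points at the foci inside the domain, and it fails the singular hypothesis. I would exploit the interior singularity as follows. The relations $zS=A_1+B_1^*(S)$ and $z^2S=A_2+B_2^*(S)$ have right-hand sides holomorphic on $\Omega$, so they constrain the behaviour of $S$ at every interior point and pin down the location and order of its interior poles; being non-symmetric in the branches of $S$, they also obstruct $S$ from being several-valued. The relation $C=D^*(S)$, by contrast, forces a genuine singularity of $D^*\circ S$ at $a$. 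Reconciling these by a valuation count at $a$ and at any branch point of $S$ in $\mathbb{C}(z)(S)$ — using that all solution functions are holomorphic on $\Omega$ whereas $\phi$ is not — is what I expect to force $S$ to have no interior branching, hence to be a single-valued rational function, and then, via $S^*\circ S=\mathrm{id}$ and a degree count, a degree-one map.

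Once $S$ is a Möbius transformation with $S^*\circ S=\mathrm{id}$, the locus $\{z:\bar z=S(z)\}$ is classically a circle or a line; since $\gamma$ was an arbitrary real-analytic arc of $\partial\Omega$, this identifies $\partial\Omega$, up to finitely many points, with a subset of a single circle or line. I expect the main obstacle to be exactly the degree-reduction step: extracting from the interior singularity and the few polynomial relations enough to eliminate \emph{all} interior branching of $S$ and bound its degree by $1$, rather than merely by $2$ (which would only recover a conic and leave the ellipse in play). Organizing this as a valuation argument in $\mathbb{C}(z)(S)$, contrasting the holomorphy of the solutions on $\Omega$ with the singularity of $\phi$, should be the crux.
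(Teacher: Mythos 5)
Your proposal stops exactly where the real difficulty begins, so it has a genuine gap. You correctly reduce everything to showing that the Schwarz function $S$ has degree one over $\mathbb{C}(z)$, but the step that would accomplish this --- the ``valuation count at $a$ and at any branch point of $S$'' --- is never carried out; you yourself flag it as ``the crux'' and an ``expected obstacle.'' Worse, the one concrete mechanism you do assert is false: you claim the relations $zS=A_1+B_1^*(S)$ and $z^2S=A_2+B_2^*(S)$, ``being non-symmetric in the branches of $S$,'' obstruct $S$ from being several-valued. They cannot. Each such relation holds on a boundary arc, hence by the identity principle holds identically on the Riemann surface of the algebraic function $S$, i.e.\ for \emph{every} branch simultaneously. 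The ellipse refutes the inference outright: its Schwarz function is genuinely two-valued, with branch points at the foci, yet by Fischer's theorem every polynomial datum on an ellipse has a polynomial solution, so \emph{all} relations of this form hold there. Consequently any argument eliminating interior branching must use the singular datum $C=D^*(S)$ in a way that is genuinely interlocked with the polynomial relations, and that argument is precisely what is missing. (A secondary issue: the theorem assumes no boundary regularity, so before a Schwarz function even exists you must first show that $\partial\Omega$ lies, off a finite set, on a real-analytic arc, e.g.\ inside the zero set of the rational harmonic function $u-\phi$; your write-up takes the arcs as given.)

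For contrast, the paper's proof avoids interior analysis entirely, which is why it does not face your crux. The singularity of $\phi$ is used only once: to guarantee that $u-\phi$ is a \emph{nonzero} rational harmonic function vanishing on $\partial\Omega$. Everything after that is field theory inside the algebra of functions on $\Gamma=\partial\Omega$ modulo finite sets: with $F=\mathbb{C}(h)$, where $h$ is the common restriction of $f$ and $\bar g$, solvability for \emph{all} monomials $z^n\bar z^m$ gives $F[z]+F[\bar z]=F[z,\bar z]$, and the elementary Proposition \ref{key} (two subfields of a finite-dimensional algebra cannot sum to it unless one is everything) forces $\bar z$ to agree on $\Gamma$ with an element of $\mathbb{C}(h)[z]$, i.e.\ with a rational function of $z$; conjugating and taking a gcd yields a bidegree-$(1,1)$ polynomial vanishing on $\Gamma$, hence a circle or line. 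Note also that the paper exploits the full hypothesis (all polynomial data), whereas you use only the data $z\bar z$, $z^2\bar z$, and $\phi$; working with finitely many data is possible in the simply connected case (as in BEKS), but it makes the missing degree-reduction argument strictly harder, not easier.
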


  In fact, we prove much more general algebraic statement. To state it, recall that given a holomorphic function
  $u$ (defined on an open subset in $\mathbb{C}$), we say that $u$ is an algebraic function of degree (at most) $n$
  if there exist rational functions $r_i(z), 0\leq i\leq n$ (not all of them zero) such that $\sum_{i=0}^nr_i(z)u(z)^i=0.$
  


\begin{theorem}\label{algebraic}

Let $\Gamma\subset \mathbb{C}$ be an infinite set.
 Assume that for any polynomial $u\in\mathbb{C}[z, \bar{z}]$, there exists a rational harmonic function $v$, such that $u=v|_{\Gamma}.$
Suppose moreover that there exist algebraic functions $f, g$ of degree at most $n$, so that $f-\bar{g}$ vanishes on $\Gamma.$
Then there exists 
a nonzero polynomial $\phi\in \mathbb{C}[z, \bar{z}]$ such that the degree of $\phi$ in $z$ and in $\bar{z}$ is at most
$n$ and $\phi$ vanishes on $\Gamma$ except for finitely many points.
\end{theorem}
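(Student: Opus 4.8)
The plan is to translate everything into the language of the curve carrying $\Gamma$ and its function field. Work in the affine plane with coordinates $(z,w)$ and let $V\subset\mathbb{C}^2$ be the Zariski closure of $\{(z,\bar z):z\in\Gamma\}$. Complex conjugation induces the involution $\sigma(z,w)=(\bar w,\bar z)$, which preserves $V$ and fixes $\Gamma$ (its real locus $w=\bar z$), and a polynomial $\phi(z,\bar z)$ vanishing on $\Gamma$ is the same thing as one vanishing on $V$. First I would check $\dim V=1$: applying the hypothesis to $u=z\bar z$ gives $z\bar z=p(z)+q(\bar z)$ on $\Gamma$ with $p,q$ rational, so if $z,\bar z$ were algebraically independent this would be a polynomial identity in two indeterminates; specializing $z$ to a generic value would force $q$ to be affine with slope equal to that value, which is absurd as the value varies. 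Hence $z,\bar z$ are dependent and $V$ is a curve.

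The first hypothesis is then recorded as the inclusion $\mathbb{C}[z,\bar z]|_{C}\subseteq\mathbb{C}(z)+\mathbb{C}(\bar z)$ on each relevant component $C$: every polynomial in $z,\bar z$ agrees on $C$ with a rational function of $z$ plus a rational function of $\bar z$. This same splitting is what forbids $\Gamma$ from spreading over several components; for instance, on two lines through the origin, comparing the splitting of $z\bar z$ on the two branches forces an odd function to equal an even one, a contradiction, and running this obstruction in general should confine all but finitely many points of $\Gamma$ to a single irreducible, $\sigma$-stable curve $C$. Because $\sigma$ fixes the infinitely many points $\Gamma\cap C$, it preserves $C$, so the defining polynomial $\phi_0$ of $C$ satisfies
\[
\deg_z\phi_0=\deg_{\bar z}\phi_0=m:=[\mathbb{C}(C):\mathbb{C}(z)].
\]
Thus it suffices to prove $m\le n$, for then $\phi:=\phi_0$ has both degrees at most $n$ and vanishes on all of $C$, hence on $\Gamma$ up to finitely many points.

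It remains to bring in the second hypothesis. Set $\tau:=f(z)=g^*(\bar z)$ on $C$, where $g^*$ denotes $g$ with conjugated coefficients; then $\tau$ is algebraic of degree $d_1\le n$ over $\mathbb{C}(z)$ and of degree $d_2\le n$ over $\mathbb{C}(\bar z)$, while its conjugate $\bar\tau=g(z)=f^*(\bar z)$ has the symmetric property. The key reduction is to show that $\bar z$ is a rational function of $z$ and $\tau$, i.e.\ $\bar z\in\mathbb{C}(z,\tau)$: this gives $\mathbb{C}(z,\bar z)\subseteq\mathbb{C}(z,\tau)$, whence $m=[\mathbb{C}(z,\bar z):\mathbb{C}(z)]\le[\mathbb{C}(z,\tau):\mathbb{C}(z)]=d_1\le n$, completing the proof. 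Now $\bar z$ is merely \emph{one} root of $g^*(X)=\tau$; the assertion is that it is the rational root, and this is exactly where the splitting $\mathbb{C}[z,\bar z]|_C\subseteq\mathbb{C}(z)+\mathbb{C}(\bar z)$ enters, applied to the relations $z^k\bar z=p_k(z)+q_k(\bar z)$ (which hold identically on $C$, hence on every $\mathbb{C}(z)$-conjugate of $\bar z$) to discard the spurious roots produced by a possibly large map-degree of $g^*$.

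I expect this last step to be the main obstacle, and it is genuinely where the two hypotheses must be married. Pure degree counting is not enough: the minimal polynomial of $\tau$ over $\mathbb{C}(\bar z)$ may have coefficients of large $\bar z$-degree, so naively eliminating $\tau$ by a resultant yields a relation between $z$ and $\bar z$ of bidegree far exceeding $(n,n)$. The whole difficulty is to see that the \emph{algebraic} degree $\le n$ of $\tau$, not its map-degree, governs the degree of the curve, and the only leverage for collapsing the extra branches is the separation coming from the first hypothesis, precisely as in the two-line example above. Finally I would dispose of the degenerate cases at the outset: if $f$ or $g$ is constant then $\tau$ is constant and $f-\bar g$ is a constant vanishing on the infinite set $\Gamma$, so the relation is vacuous and such $f,g$ may be excluded.
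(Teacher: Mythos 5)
Your proposal is a framework rather than a proof: the two statements that carry all the weight are exactly the ones you leave open, and you say so yourself. The central gap is the step you call ``the main obstacle,'' namely that $\bar z\in\mathbb{C}(z,\tau)$. You correctly diagnose that counting polynomial bidegrees cannot work (resultants blow up the degree), but you supply no mechanism that does work. The paper's mechanism is Proposition~\ref{key}, and it is a dimension count after all --- only at the level of field extensions inside the algebra of restrictions, not of polynomial degrees. Concretely: let $\mathcal{O}$ be the algebra of functions on $\Gamma$ defined off finite sets, let $h=f|_\Gamma=\bar g|_\Gamma$ and $F=\mathbb{C}(h)$. By Lemma~\ref{trivial}, $z$ and $\bar z$ are algebraic over $F$, so $F[z]=F(z)$ and $F[\bar z]=F(\bar z)$ are \emph{subfields} of the finite-dimensional $F$-algebra $A=F[z,\bar z]$, and your first hypothesis says precisely that $A=F[z]+F[\bar z]$, since each monomial $z^a\bar z^b$ restricted to $\Gamma$ lies in $\mathbb{C}(z)+\mathbb{C}(\bar z)\subset F[z]+F[\bar z]$. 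Because $\dim_F F[z]$ and $\dim_F F[\bar z]$ both divide $\dim_F A$, while $\dim_F A\le \dim_F F[z]+\dim_F F[\bar z]-1$, one of the two subfields must equal $A$; without loss of generality $\bar z\in \mathbb{C}(h)[z]$, which is exactly your missing assertion. This is the ``marriage'' of the two hypotheses that you could not find, and nothing in your root-by-root discussion of $g^*(X)=\tau$ substitutes for it.

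The second gap is your confinement of all but finitely many points of $\Gamma$ to a \emph{single} irreducible curve $C$: you verify it for two lines through the origin and then assert the obstruction ``should'' work in general. This is not established, and in your architecture it is indispensable, since if $\Gamma$ spread over components $C_1,\dots,C_k$ then a bound $[\mathbb{C}(C_i):\mathbb{C}(z)]\le n$ for each $i$ would not bound the bidegree of the product of the defining polynomials by $n$. The paper's route makes this issue evaporate: once $\bar z\in\mathbb{C}(f)[z]$, Lemma~\ref{trivial} yields polynomials $p_i\in\mathbb{C}[z]$, not all zero, with $\sum_{i=0}^n p_i(z)\bar z^i=0$ on $\Gamma$ off a finite set; conjugating gives $\sum_{i=0}^n \overline{p_i(z)}z^i=0$ there as well, and the gcd of these two elements of $\mathbb{C}[z,\bar z]$ has degree at most $n$ in $z$ and in $\bar z$ separately and still vanishes on all but finitely many points of $\Gamma$ (otherwise the two coprime cofactors would share infinitely many zeros, contradicting B\'ezout). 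No irreducibility, $\sigma$-stability, or single-component statement is ever needed. To complete your write-up you would either have to prove both of your pending claims or switch to this restriction-algebra argument.
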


Our proof relies solely on some elementary algebraic considerations.

\begin{remark}

Under the assumption that the Dirichlet problem for restrictions on $\partial\Omega$ of all polynomials and $\frac{1}{z-a}$ (for some $a\in\Omega$)
has an algebraic solution, it was proved in [\cite{BEKS}, Theorem 3] that $\Omega$ must be simply connected with an algebraic Riemann mapping.

\end{remark}

\section{Proofs of theorems \ref{main},  \ref{algebraic}}

At first, remark that the existence of a rational harmonic function $\phi$ with a singularity in $\Omega,$ such that the Dirichlet problem on $\Omega$
with the data $\phi|_{\partial\Omega}$ has a rational solution implies  the existence of a nonzero rational harmonic function vanishing on $\partial{\Omega}.$
Indeed, if $u$ is a harmonic function on $\Omega$ such that  $u=\phi|_{\partial\Omega}$, then $u-\phi$ is a nonzero harmonic rational
function vanishing on the boundary of $\Omega.$
In view of this, Theorem \ref{main} follows immediately from Theorem \ref{algebraic} by putting $n=1.$

The proof of Theorem \ref{algebraic} is based on the following simple algebraic observation.

\begin{prop}\label{key}

Let $A$ be a finite dimensional algebra over a field $F.$
Let $ F_1, F_2$ be subfields of $A$ containing $F.$
If $F_1+F_2=A$ then $A=F_1$ or $A=F_2.$
\end{prop}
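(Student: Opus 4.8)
The plan is to reduce the statement to a pair of divisibility relations among the $F$-dimensions $d_1=\dim_F F_1$, $d_2=\dim_F F_2$ and $d_0=\dim_F(F_1\cap F_2)$, and then to show these relations are incompatible unless one subfield contains the other. The single structural input I would extract is this: although $F_2$ need not be a two-sided ideal of $A$, it is a subalgebra, hence a \emph{right} $F_2$-submodule of $A$ under right multiplication (unitality uses $1\in F_2$). Consequently the quotient $A/F_2$ is a right vector space over the field $F_2$, so its $F$-dimension is divisible by $d_2$.

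Next I would compute that dimension. Since $A=F_1+F_2$, the composite $F_1\hookrightarrow A\twoheadrightarrow A/F_2$ is surjective, and its kernel is exactly $F_1\cap F_2$; hence $A/F_2\cong F_1/(F_1\cap F_2)$ as $F$-spaces and $\dim_F(A/F_2)=d_1-d_0$. Combined with the previous paragraph this yields $d_2\mid(d_1-d_0)$, and the symmetric argument (using that $A/F_1$ is a left $F_1$-vector space via left multiplication) gives $d_1\mid(d_2-d_0)$.

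Finally I would run a size argument. Because $F_1\cap F_2$ is a subfield of each $F_i$, the tower law gives $d_0\le d_1$ and $d_0\le d_2$. If both inclusions $F_1\subseteq F_2$ and $F_2\subseteq F_1$ failed, then $d_1-d_0$ and $d_2-d_0$ would both be strictly positive; the relation $d_2\mid(d_1-d_0)$ would then force $d_2\le d_1-d_0<d_1$, while $d_1\mid(d_2-d_0)$ would force $d_1\le d_2-d_0<d_2$, which is absurd. Hence $d_0=d_1$ or $d_0=d_2$, i.e. $F_1\subseteq F_2$ or $F_2\subseteq F_1$, and in either case $A=F_1+F_2$ collapses to $F_2$ or to $F_1$, as claimed.

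The part I expect to be the crux is the very first observation: recognizing that the non-ideal subalgebra $F_2$ nevertheless makes $A/F_2$ a genuine $F_2$-vector space, so that divisibility by $d_2$ becomes available. Once that is in hand the remainder is elementary dimension counting and arithmetic, and in particular the argument needs neither commutativity of $A$ nor any appeal to the structure theory (Jacobson radical, idempotents) of finite-dimensional algebras.
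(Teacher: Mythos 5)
Your proof is correct and takes essentially the same route as the paper's: the structural input in both cases is that $A$ (in the paper) or the quotients $A/F_1$, $A/F_2$ (in your version) carry vector-space structures over the subfields, so $F$-dimensions are divisible by $\dim_F F_i$, and this is combined with the dimension count for the sum $F_1+F_2$. Since $\dim_F A=\dim_F F_1+\dim_F F_2-\dim_F(F_1\cap F_2)$, your relations $\dim_F F_2\mid\dim_F F_1-\dim_F(F_1\cap F_2)$ and $\dim_F F_1\mid\dim_F F_2-\dim_F(F_1\cap F_2)$ are arithmetically equivalent to the paper's statement that $\dim_F F_1$ and $\dim_F F_2$ divide $\dim_F A\le \dim_F F_1+\dim_F F_2-1$, so the two arguments differ only in packaging.
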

\begin{proof}
Since $\dim_F(F_1\cap F_2)\geq 1,$  then
 $$\dim_F(A)\leq \dim_FF_1+\dim_FF_2-1.$$ On the other hand,
as $A$ can be viewed as a vector space over $F_1$ and $F_2,$ we conclude that $\dim_FF_1, \dim_FF_2$ must divide $\dim_F(A).$
Therefore $$\dim_F(A)=\text{max}(\dim_FF_1, \dim_FF_2).$$
Hence, $A=F_1$ or $A=F_2.$

\end{proof}

We also recall the following very simple fact. We include the proof for the reader's convenience 

\begin{lemma}\label{trivial}
Let $u$ be an algebraic function of degree $n.$ Then $z$ is algebraic over $\mathbb{C}(u)$, and any
element in $\mathbb{C}(u, z)$ is an algebraic function of degree at most $n.$

\end{lemma}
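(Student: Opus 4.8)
The plan is to translate the analytic notion of an algebraic function into ordinary field theory over $\mathbb{C}(z)$, and then read off both claims from the multiplicativity of degrees in a tower of fields. First I would clear denominators in the defining relation $\sum_{i=0}^n r_i(z)u^i=0$: writing each $r_i$ as a ratio of polynomials and multiplying through, I obtain $\sum_{i=0}^n P_i(z)u^i=0$ with $P_i\in\mathbb{C}[z]$ not all zero. Read as a polynomial in $u$ with coefficients in $\mathbb{C}(z)$, this exhibits $u$ as algebraic over $\mathbb{C}(z)$ of degree at most $n$, so the extension $\mathbb{C}(z,u)/\mathbb{C}(z)$ is finite with $[\mathbb{C}(z,u):\mathbb{C}(z)]\leq n$.

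For the first assertion I would swap the roles of the two variables. The key point is that a non-constant holomorphic function $u$ is transcendental over $\mathbb{C}$: a polynomial relation with constant coefficients would force the image of $u$ into the finite zero set of that polynomial, hence make $u$ locally constant and thus constant. Consequently $\mathbb{C}(u)\cong\mathbb{C}(T)$, and the bivariate polynomial $F(z,u)=\sum_i P_i(z)u^i$, being genuinely nonzero, still has a nonzero coefficient when regrouped as a polynomial in $z$ over $\mathbb{C}(u)$. That polynomial is annihilated by the actual function $z$, so $z$ is algebraic over $\mathbb{C}(u)$.

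Finally, for an arbitrary $w\in\mathbb{C}(u,z)$ I would invoke the tower $\mathbb{C}(z)\subseteq\mathbb{C}(z)(w)\subseteq\mathbb{C}(z,u)$. Multiplicativity of degrees gives that $[\mathbb{C}(z)(w):\mathbb{C}(z)]$ divides $[\mathbb{C}(z,u):\mathbb{C}(z)]\leq n$, so the minimal polynomial of $w$ over $\mathbb{C}(z)$ has degree at most $n$; clearing denominators turns it into a relation $\sum_{i=0}^n s_i(z)w^i=0$ with $s_i\in\mathbb{C}(z)$ not all zero, which is precisely the statement that $w$ is an algebraic function of degree at most $n$.

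The only genuinely delicate step is the transcendence-and-regrouping argument establishing that $z$ is algebraic over $\mathbb{C}(u)$; everything else is bookkeeping with degrees of field extensions. I would also note at the outset that the lemma is implicitly understood for non-constant $u$, since for a constant $u$ the field $\mathbb{C}(u)$ reduces to $\mathbb{C}$ and $z$ is transcendental over it.
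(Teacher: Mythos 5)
Your proof is correct and follows essentially the same route as the paper's: the first claim via symmetry of algebraic dependence (which you make explicit through the transcendence of a non-constant holomorphic $u$ over $\mathbb{C}$ and regrouping the bivariate relation), and the second via the bound $[\mathbb{C}(u,z):\mathbb{C}(z)]\leq n$ combined with the tower/multiplicativity argument. You also correctly flag the implicit non-constancy assumption on $u$, which the paper's one-line argument glosses over but which holds in its application.
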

\begin{proof}
As $u$ is algebraic over $\mathbb{C}(z),$ then $u, z$ are algebraically dependent over $\mathbb{C}$, hence $z$ must be
algebraic over $\mathbb{C}(u).$
Since $u$ is a root of a degree $n$ polynomial over $\mathbb{C}(z)$, it follows that the degree of the field extension
$\mathbb{C}(u, z)/\mathbb{C}(z)$ is at most $n.$ Thus, every element in $\mathbb{C}(u, z)$ is an algebraic function
of degree at most $n.$ 

\end{proof}

\begin{proof}[Proof of Theorem \ref{algebraic}] 
By the assumption on $\Gamma$,  there exist nonconstant holomorphic functions
$f, g$ defined on a neighborhood of $\Gamma$, such that $f(z)=\overline{g(z)}|_{\Gamma},$ 
and $f, g$ are algebraic of degree at most $n.$
Let $\mathcal{O}$ denote the $\mathbb{C}$-algebra of functions defined on $\Gamma$  except possibly for finitely many
points. Note that the field of algebraic functions defined on a neighborhood of $\Gamma$ maps into
$\mathcal{O}$ by the restriction homomorphism.

Denote by $h$ the image of $f$ (same as the image of $\bar{g}$) in $\mathcal{O}.$
Let $A$ denote the subalgebra of $\mathcal{O}$ generated by images of
 $z, \bar{z}$ over $\mathbb{C}(h)=F.$
Thus, $F$  is a field and  $A$ is an algebra over  $F.$
 We identify $z, \bar{z}$ with their images in $\mathcal{O}.$
Next we claim that 
$$A=F[z, \bar{z}]=F[z]+F[\bar{z}].$$ Indeed, since $z,\bar{z}$ are algebraic over $F$ by Lemma \ref{trivial}, we have 
$$F(z)=F[z],\quad F(\bar{z})=F[\bar{z}].$$
So, $A=F[z, \bar{z}].$ In particular, $\dim_FA<\infty.$ We need to show that 
$$z^n\bar{z}^m\in F[z]+F[\bar{z}], \quad n, m\geq 0.$$
This follows from the assumption that there exists a rational harmonic function $u\in \mathbb{C}(z)+\mathbb{C}(\bar{z}),$
so that $u=z^n\bar{z}^m|_{\Gamma}.$ Hence
 $$z^n\bar{z}^m\in \mathbb{C}(z)+\mathbb{C}(\bar{z})\subset F[z]+F[\bar{z}].$$

\noindent Put $F_1=F[z], F_2=F[\bar{z}].$ Thus $F_1, F_2$ are subfields of $A$ containing $F$ and $A=F_1+F_2.$
Therefore, we can apply Proposition \ref{key} to conclude that $A=F_1$ or $A=F_2.$
Assume without loss of generality that $\bar{z}\in \mathbb{C}(h)[z].$

To summarize, we have proved that there exists  $u\in \mathbb{C}(f)[z]$ such that $\bar{z}=u|_{\Gamma}.$
It follows from Lemma \ref{trivial} that the degree of $u$ over $\mathbb{C}(z)$ is at most $n.$ Therefore,  there exist polynomials
$p_i\in \mathbb{C}[z]$, such that for any $z\in\Gamma$  (except for finitely many points)
$$\sum _{i=0}^np_i(z)\bar{z}^i=0.$$
Therefore,  $$\sum _{i=0}^n\overline{p_i(z)}{z}^i=0|_{\Gamma}.$$
Let $h$ be the greatest common divisor of $\sum _{i=0}^np_i(z)\bar{z}^i$ and $\sum _{i=0}^n\overline{p_i(z)}{z}^i$
in $\mathbb{C}[z,\bar{z}].$
Then the degree of $h$ in $z, \bar{z}$ is at most $n$ and $h$ vanishes on $\Gamma$ (except for finitely many points), as
desired.


\end{proof}

\section{Results on the Khavinson-Shapiro conjecture}

It will be convenient to introduce the following terminology

\begin{defin}\label{KS-property}
Let $F$ be a field.
Let $f\in F[z, w].$ We say that $f$ is a  Khavinson-Shapiro polynomial (KS-polynomial for short)
if for any $\phi\in \mathbb{F}[z, w] $ there exist $g_1(z)\in F[z], g_2(w)\in F[w],$ such that
$\phi-g_1(z)-g_2(w)$ is a multiple of $\phi.$

\end{defin}

Throughout polynomials of the form $f(z)+g(w)\in F[z, w]$ are referred to as harmonic polynomials in $F[z, w].$

We recall the following classical result of Fischer. We include the usual proof
for the reader's convenience as it is stated over any field (including ones with positive characteristic).

\begin{lemma}[Fischer]

Let $f\in F[z, w]$ be of degree 2. If $f$ does not divide any nonzero harmonic polynomial, then $f$ is a $KS$-polynomial.

\end{lemma}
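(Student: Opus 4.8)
The plan is to reinterpret the $KS$-property as a surjectivity statement and then force it by a dimension count that works precisely because $\deg f = 2$. Let $H=F[z]+F[w]$ denote the space of harmonic polynomials and let $R=F[z,w]/(f)$. Composing the inclusion $H\hookrightarrow F[z,w]$ with the quotient map gives an $F$-linear map $\Phi\colon H\to R$. By definition $f$ is a $KS$-polynomial exactly when every $\phi\in F[z,w]$ is congruent modulo $f$ to some $g_1(z)+g_2(w)$, i.e. exactly when $\Phi$ is surjective; this is what I must prove. First I would record that the hypothesis makes $\Phi$ \emph{injective}: its kernel is $H\cap(f)$, the set of harmonic polynomials divisible by $f$, which is $\{0\}$ precisely because $f$ divides no nonzero harmonic polynomial.

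Next I would filter everything by total degree, writing $H_{\le d}$ and $R_{\le d}$ for the images of the degree $\le d$ elements, and compute both dimensions. On one hand $H_{\le d}$ has basis $\{1\}\cup\{z^i:1\le i\le d\}\cup\{w^j:1\le j\le d\}$, so $\dim_F H_{\le d}=2d+1$. On the other hand $R_{\le d}$ is the image of $F[z,w]_{\le d}$, whose kernel is $(f)\cap F[z,w]_{\le d}$. Here the key point is that $F[z,w]$ is an integral domain, so $\deg(gf)=\deg g+2$ and multiplication by $f$ is injective; hence $(f)\cap F[z,w]_{\le d}=f\cdot F[z,w]_{\le d-2}$ has dimension $\binom{d}{2}$, giving $\dim_F R_{\le d}=\binom{d+2}{2}-\binom{d}{2}=2d+1$. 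It is exactly the assumption $\deg f=2$ that makes the two counts agree (for $\deg f=k$ the second count would be $\binom{d+2}{2}-\binom{d-k+2}{2}$, which matches $2d+1$ only when $k=2$).

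Finally, since $\Phi$ carries $H_{\le d}$ into $R_{\le d}$ and is injective, its restriction $H_{\le d}\to R_{\le d}$ is an injective $F$-linear map between spaces of the same finite dimension $2d+1$, hence an isomorphism; in particular $R_{\le d}\subseteq\operatorname{im}\Phi$. Letting $d\to\infty$ gives $R=\bigcup_d R_{\le d}=\operatorname{im}\Phi$, so $\Phi$ is surjective and $f$ is a $KS$-polynomial. The argument is characteristic-free and uses nothing beyond linear algebra together with the fact that $F[z,w]$ is a domain. I expect the only step needing care is the dimension computation for $R_{\le d}$ — verifying that the bounded-degree multiples of $f$ are exactly $f\cdot F[z,w]_{\le d-2}$ — since it is there that the hypothesis $\deg f=2$ is consumed and the balance $\dim_F R_{\le d}=\dim_F H_{\le d}$ emerges.
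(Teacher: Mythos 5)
Your proof is correct. It establishes the same decomposition $F[z,w]=(F[z]+F[w])\oplus(f)$ that the paper's argument yields, and it consumes the hypothesis in the same place (injectivity: a harmonic multiple of $f$ must be zero), but the mechanism is genuinely different. The paper follows Fischer's operator trick: with $D(z^nw^m)=z^{n-1}w^{m-1}$ for $n,m>0$ and $D=0$ on the remaining monomials, the kernel of $D$ is exactly $F[z]+F[w]$, and the hypothesis makes $G(\phi)=D(f\phi)$ injective; since $\deg f=2$, multiplication by $f$ raises total degree by $2$ while $D$ lowers it by $2$, so $G$ maps each finite-dimensional space of polynomials of degree at most $m$ into itself, hence is bijective there, and then $\phi-f\phi_1$ with $D(\phi)=D(f\phi_1)$ is harmonic. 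That packaging makes the dimension count automatic --- source and target are literally the same space --- so no formula like $\binom{d+2}{2}-\binom{d}{2}=2d+1$ is ever needed. Your version replaces the operator $D$ by the quotient ring $F[z,w]/(f)$ and does the count by hand; the price is the extra (correctly executed) verification that $(f)\cap F[z,w]_{\le d}=f\cdot F[z,w]_{\le d-2}$, which uses that $F[z,w]$ is a domain --- a fact the paper also needs, silently, to conclude $\phi=0$ from $f\phi=0$ in the injectivity step. What your route buys is transparency: the balance $\dim H_{\le d}=\dim R_{\le d}=2d+1$ displays exactly where $\deg f=2$ enters and why the argument has no analogue for higher-degree $f$, a point that in the paper's proof is hidden in the one-line remark that $G$ preserves the degree filtration.
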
  

\begin{proof}
Denote by $D: F[z, w]\to F[z, w]$ a linear map defined by $D(z^nw^m)=z^{n-1}w^{m-1}$ if $n, m>0$ and 0 otherwise.
Then $\ker(D)$ consists of harmonic polynomials and $D$ is onto.
So, by the assumption on $f$ the linear map 
$$G: F[z, w]\to F[z, w], G(\phi)=D(f\phi)$$
 is injective. On the other hand,  for any $m\geq 0,$ $G$ preserves the finite dimensional
subspace of all polynomials of degree at most $m.$  Therefore, $G$ is bijective. 
Given $\phi\in F[z, w],$ 
let $\phi_1$  be such that $D(\phi)=D(f\phi_1).$
So, $\phi-f\phi_1$ is harmonic and we are done.

\end{proof}

\begin{lemma}

Let $\phi\in F[z, w]$ be linear in $z.$ Then $\phi$ is a KS-polynomial if and only if $\phi=f(w)z-g(w)$ with linear $f.$

\end{lemma}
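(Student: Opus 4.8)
The plan is to write $\phi=a(w)z+b(w)$ with $a,b\in F[w]$ and $a\neq 0$ (this is forced by $\phi$ being linear in $z$), and to prove that $\phi$ is a KS-polynomial if and only if $\deg_w a\le 1$; this is exactly the statement that $\phi=f(w)z-g(w)$ with $f=a$ linear. Throughout I use that, by Definition \ref{KS-property}, $\phi$ is a KS-polynomial precisely when $F[z,w]=(\phi)+F[z]+F[w]$.

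For the \emph{only if} direction I argue the contrapositive: assuming $\deg_w a\ge 2$, the single polynomial $zw$ already fails to reduce. Suppose toward a contradiction that $zw-g_1(z)-g_2(w)=\phi\,h$ with $g_1\in F[z]$, $g_2\in F[w]$, $h\in F[z,w]$. Note $h\neq 0$, since $zw$ contains a genuine mixed monomial and hence is not of the form $g_1(z)+g_2(w)$. Reading the identity as an equality of polynomials in $z$ with coefficients in $F[w]$, the right-hand side has $z$-degree $1+\deg_z h$ with leading coefficient $a(w)$ times the leading $z$-coefficient of $h$, a non-constant element of $F[w]$ because $\deg_w a\ge 2$. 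Comparing leading behaviour in $z$ first forces $\deg_z g_1\le 1$ (a higher $z$-degree on the left would have a nonzero constant leading coefficient, which cannot match the non-constant one on the right), and then forces $h=h(w)\in F[w]$ so that the two $z$-degrees agree. Writing $g_1=c_1z+c_0$ and comparing the coefficients of $z^1$ gives $w-c_1=a(w)h(w)$, which is impossible: the right-hand side is nonzero of $w$-degree $\ge\deg_w a\ge 2$, while $w-c_1$ has degree $1$. Hence $\phi$ is not a KS-polynomial.

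For the \emph{if} direction, assume $\deg_w a\le 1$. If $a$ is a nonzero constant, then $z\mapsto -b(w)/a$ identifies $F[z,w]/(\phi)$ with $F[w]$, so every class is represented by an element of $F[w]$ and $\phi$ is a KS-polynomial. If $\deg_w a=1$, I first pass to the normal form $a=w$: the map $\sigma$ fixing $z$ and sending $w\mapsto a(w)$ (an $F$-algebra automorphism of $F[z,w]$ since $\deg_w a=1$) preserves both $F[z]$ and $F[w]$ and carries the decomposition attached to $\phi_0:=wz+\tilde b(w)$ onto the one attached to $\phi$, so it suffices to treat $\phi_0$. Modulo $\phi_0$ one has $wz\equiv -\tilde b(w)$, and I show by induction on the $z$-degree that every monomial $z^iw^j$ is congruent to an $F$-linear combination of pure powers of $z$ and of $w$: when $i,j\ge 1$ one rewrites $z^iw^j=z^{i-1}(wz)w^{j-1}\equiv -z^{i-1}w^{j-1}\tilde b(w)$ and applies the inductive hypothesis to the resulting monomials of strictly smaller $z$-degree. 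This yields $F[z,w]=(\phi_0)+F[z]+F[w]$, so $\phi_0$, and therefore $\phi$, is a KS-polynomial.

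The crux is the \emph{only if} direction: the key realizations are that the single test polynomial $zw$ suffices to obstruct the KS property, and that the degree bookkeeping in $z$ must be carried out in the correct order --- first bounding $\deg_z g_1$, then deducing $h\in F[w]$, and only then extracting the degree contradiction $w-c_1=a(w)h(w)$. The \emph{if} direction is routine once the normal form $a=w$ is isolated.
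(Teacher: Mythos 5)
Your proof is correct, and its crux --- testing the KS property against the single polynomial $zw$, writing $zw-g_1(z)-g_2(w)=\phi h$, and comparing leading coefficients in $z$ to force $a(w)$ to divide a linear polynomial in $w$ --- is exactly the paper's argument, just with the degree bookkeeping (bounding $\deg_z g_1$, then forcing $h\in F[w]$) spelled out in full. The only difference is that you also supply the routine \emph{if} direction (normalizing to $a=w$ via an affine automorphism and reducing monomials through $wz\equiv-\tilde b(w)$), which the paper's proof omits entirely.
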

\begin{proof}
By the assumption $f(w)z-g(w)$ divides $zw+h(z)+h_1(w)$ for some $h, h_1.$
Comparing the leading terms in $z,$ $f(w)$ must divide a linear polynomial in $w,$ hence the result.

\end{proof}
The proof of the next result is essentially identical to that of Theorem \ref{algebraic}, so it is omitted.

\begin{theorem}
Let $f\in F[z, w]$ be an irreducible polynomial nonlinear in both $z, w$ and be a divisor of a nonzero polynomial of the form
$\phi(z)\psi(w)+\phi_1(z)\psi_1(w).$ Then $f$ is not a KS-polynomial.

\end{theorem}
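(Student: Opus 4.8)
The plan is to argue by contradiction: assume $f$ is a KS-polynomial and work inside the integral domain $A = F[z,w]/(f)$ (an integral domain because $f$ is irreducible), writing $\bar z, \bar w$ for the images of $z, w$ and $K = \operatorname{Frac}(A)$ for its fraction field. The divisibility hypothesis $f \mid \phi(z)\psi(w) + \phi_1(z)\psi_1(w)$ becomes the relation $\phi(\bar z)\psi(\bar w) = -\phi_1(\bar z)\psi_1(\bar w)$ in $A$. The crucial first step is to extract from this a single element
\[
h = \frac{\phi(\bar z)}{\phi_1(\bar z)} = -\frac{\psi_1(\bar w)}{\psi(\bar w)} \in K,
\]
which simultaneously lies in $F(\bar z)$ and in $F(\bar w)$. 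This $h$ is the exact analogue of the element $h = f|_\Gamma = \bar g|_\Gamma$ appearing in the proof of Theorem \ref{algebraic}.

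Second, I would set $L = F(h)$ and check that $L$ is the correct base field. Before doing so one must dispose of the degenerate configurations. If $\phi_1 = 0$ or $\psi = 0$, then $f$ divides a product of a polynomial in $z$ and a polynomial in $w$, forcing (by irreducibility) $f$ to divide a nonzero polynomial in a single variable, which is impossible because $f$ genuinely involves both variables. If instead $\phi = c\phi_1$ for some $c \in F$, the same single-variable obstruction forces $\psi_1 = -c\psi$, whence $\phi\psi + \phi_1\psi_1 = 0$, contrary to hypothesis. Ruling these out shows $h \notin F$, i.e. $h$ is transcendental over $F$. Then the relation $\phi(\bar z) - h\,\phi_1(\bar z) = 0$ exhibits $\bar z$ as algebraic over $L$, and symmetrically $\psi_1(\bar w) + h\,\psi(\bar w) = 0$ exhibits $\bar w$ as algebraic over $L$; consequently $K = L[\bar z, \bar w]$ is finite-dimensional over $L$. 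I expect this verification of transcendence together with the elimination of degenerate cases to be the main obstacle, since everything downstream depends on $L = F(h)$ being a field over which both coordinate functions are algebraic.

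Third, exactly as in Theorem \ref{algebraic}, I would feed in the KS-property to split $K$. For every monomial $z^i w^j$ the KS-property produces $g_1(z), g_2(w)$ with $z^i w^j \equiv g_1(z) + g_2(w) \pmod f$, hence $\bar z^i \bar w^j = g_1(\bar z) + g_2(\bar w) \in F[\bar z] + F[\bar w] \subseteq L[\bar z] + L[\bar w]$. Since such monomials span $K = L[\bar z, \bar w]$ over $L$, this gives $K = L[\bar z] + L[\bar w]$. Setting $F_1 = L[\bar z]$ and $F_2 = L[\bar w]$, both of which are subfields of $K$ containing $L$ because $\bar z, \bar w$ are algebraic over $L$, Proposition \ref{key} forces $K = L[\bar z]$ or $K = L[\bar w]$.

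Finally I would extract the contradiction. Say $K = L[\bar z]$; since $h \in F(\bar z)$ we have $L[\bar z] = F(\bar z)$, so $\bar w \in F(\bar z)$, i.e. $\bar w = p(\bar z)/q(\bar z)$ with $q \neq 0$. This means $f \mid q(z)w - p(z)$, a nonzero polynomial of degree exactly $1$ in $w$, which is impossible since every nonzero multiple of $f$ has degree at least $\deg_w f \geq 2$ in $w$, as $f$ is nonlinear in $w$. The case $K = L[\bar w]$ is symmetric and contradicts nonlinearity in $z$. Either way we contradict the assumption that $f$ is a KS-polynomial, completing the proof.
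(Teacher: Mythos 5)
Your proof is correct and is precisely the argument the paper has in mind: the paper omits the proof, stating it is essentially identical to that of Theorem \ref{algebraic}, and your construction of the common element $h=\phi(\bar z)/\phi_1(\bar z)=-\psi_1(\bar w)/\psi(\bar w)\in F(\bar z)\cap F(\bar w)$, the decomposition $K=L[\bar z]+L[\bar w]$ over $L=F(h)$ via the KS-property, and the application of Proposition \ref{key} followed by the degree-in-$w$ (resp.\ $z$) contradiction is exactly that adaptation. Your explicit handling of the degenerate cases ($\phi_1=0$, $\psi=0$, or $\phi=c\phi_1$) is a welcome detail the paper leaves implicit.
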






The following result shows that in order to resolve Conjecture \ref{KS},  it is enough to treat the case of $F$ being a finite extension of $\mathbb{Q}$ or a finite field.
Given a polynomial $f\in F[z, w],$ denote by $Supp(f)$ the set of pairs $(n, m)$ for which $z^{n}w^m$ has a nonzero coefficient in $f.$
\begin{prop}

Let $f\in F[z, w]$ be a KS-polynomial. Then there exists infinitely many primes $p$ and polynomials $g\in \mathbb{F}_p[z, w]$
so that $Supp(g)=Supp(f)$ and $g$ is a KS-polynomial. If in addition $F$ has characteristic 0, then $F$ can be replaced by a number field.

\end{prop}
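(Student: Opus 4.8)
The plan is to recast the Khavinson--Shapiro property as surjectivity of a single linear map and then run a spreading-out argument. Writing $B_F:=F[z,w]/(f)$ and letting $\pi_F\colon F[z]\oplus F[w]\to B_F$ be the map induced by $(g_1,g_2)\mapsto g_1+g_2$, Definition~\ref{KS-property} says exactly that $f$ is a KS-polynomial iff $\pi_F$ is surjective, i.e. $F[z,w]=(f)+F[z]+F[w]$. The first thing I would record is that this property depends only on the subfield $F_0\subseteq F$ generated over the prime field by the coefficients of $f$: for each monomial $z^aw^b$ the membership $z^aw^b\in (f)+F[z]+F[w]$ is the consistency of a linear system whose entries lie in $F_0$, and consistency of a linear system is preserved under passage to a subfield, being a statement about ranks. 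Hence $f$ is already a KS-polynomial over $F_0$, and I may assume $F=F_0$ is finitely generated over its prime field.

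Next I would spread the situation out. Let $R_0$ be the prime-field subalgebra of $F_0$ generated by the coefficients $c_{ij}$ of $f$, a finitely generated domain with $\mathrm{Frac}(R_0)=F_0$, over which $\pi$ is defined: $\pi_{R_0}\colon R_0[z]\oplus R_0[w]\to B_{R_0}:=R_0[z,w]/(f)$. Because $-\otimes_{R_0}\kappa$ is right exact, for every field quotient $R_0\to\kappa$ one has $\mathrm{coker}(\pi_{R_0})\otimes_{R_0}\kappa=\mathrm{coker}(\pi_\kappa)$, and $\pi_\kappa$ is the analogous map for the reduced polynomial $\bar f\in\kappa[z,w]$. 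Setting $C:=\mathrm{coker}(\pi_{R_0})$, I obtain the clean dictionary: if $R_0\to\kappa$ sends every $c_{ij}$ to a nonzero element then $\bar f$ has the same support as $f$, and $\bar f$ is a KS-polynomial over $\kappa$ precisely when $C\otimes_{R_0}\kappa=0$. Since $f$ is KS over $F_0=\mathrm{Frac}(R_0)$, we know $C\otimes_{R_0}F_0=0$, i.e. $C$ is a torsion $R_0$-module. The task is thereby reduced to producing many maximal ideals $\mathfrak m\subset R_0$, with residue field of the desired arithmetic type, such that $c_{ij}\notin\mathfrak m$ for all $(i,j)$ and $C\otimes_{R_0}\kappa(\mathfrak m)=0$.

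I expect this last step to be the main obstacle, and it is where the polynomial structure must be used rather than soft module theory: $C$ is not finitely generated over $R_0$, so the vanishing of $C\otimes F_0$ does not by itself confine the bad locus to a proper closed subset (a priori $C$ could behave like $\bigoplus_p\mathbb F_p$ and be nonzero at every closed point). To control this I would filter $C$ by total degree, $C=\varinjlim_m C_{\le m}$ with each $C_{\le m}$ finitely generated over the Noetherian ring $R_0$; the KS-property over $F_0$ says that each class in $C_{\le m}$ dies once it is captured by pure powers of sufficiently high degree, so the image of $C_{\le m}$ in $C$ is a finitely generated torsion module, killed by some nonzero $a_m\in R_0$. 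The crucial point I must establish is a \emph{uniform degree bound}, namely that the degree at which classes die is bounded in terms of $\deg f$ (an effective-membership, or graded generic-freeness, statement), so that only finitely many annihilators $a_1,\dots,a_M$ are needed. Granting this, the good locus is the nonempty open $D(a_1\cdots a_M\prod c_{ij})\subseteq\mathrm{Spec}\,R_0$. The difficulty is genuine precisely because the naive degree filtration is not strict: the ``degree drop'' coming from reducing high pure powers modulo $f$ is exactly the phenomenon that keeps the full conjecture open, so a crude dimension count is not available.

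Finally I would harvest points of the required type. In characteristic $0$ I pass to the finitely generated $\mathbb Q$-algebra $R_0\otimes\mathbb Q$; any closed point of the nonempty open $D(a_1\cdots a_M\prod c_{ij})$ has residue field a number field $K$, producing a KS-polynomial $g$ over $K$ of the same support. For the finite-field statement I regard $R_0$ as a finite-type $\mathbb Z$-scheme whose good locus is a nonempty open dominating $\mathrm{Spec}\,\mathbb Z$, and I want closed points whose residue field is a \emph{prime} field $\mathbb F_p$. The cleanest route is to reduce the number-field model $g/K$ modulo the primes $\mathfrak p$ of $\mathcal O_K$ of residue degree one: by Chebotarev there are infinitely many rational primes $p$ admitting such a $\mathfrak p$, and for all but finitely many of them the reduction preserves both the support and, through the finitely many conditions $a_i\notin\mathfrak p$, the KS-property, yielding KS-polynomials over $\mathbb F_p$ for infinitely many $p$. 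The two places where real work is hidden are thus the uniform degree bound above and this density input guaranteeing prime, not merely finite, residue fields.
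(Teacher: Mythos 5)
Your setup is correct as far as it goes: the reformulation of the KS property as $F[z,w]=(f)+F[z]+F[w]$, the descent to the finitely generated field $F_0$ via rank-invariance of linear systems, and the spreading-out dictionary ($f$ is KS over $F_0=\mathrm{Frac}(R_0)$ iff the cokernel $C$ is a torsion $R_0$-module) are all fine. You have also correctly located the crux: since $C$ is not finitely generated, torsionness only yields, for each monomial $z^nw^m$, a nonzero annihilator $a_{n,m}\in R_0$, and you must then find a specialization avoiding the countable union $\bigcup_{n,m}V(a_{n,m})$, which over a countable base can swallow every closed point (your $\bigoplus_p\mathbb{F}_p$ example). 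But you do not close this gap: everything after ``Granting this'' is conditional on the \emph{uniform degree bound} (equivalently, a single nonzero $a\in R_0$ with $aC=0$), for which you give no argument, and which you yourself observe cannot be obtained by the naive filtration/dimension count. As it stands this is not a proof; the central step is missing, and it is not even clear that the uniform bound is true --- nothing in the proposition forces it, and the paper never proves it either.

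The paper's proof avoids this obstruction entirely by encoding the problem with conditions of the opposite Zariski type. For each triple $(n,m,k)$, the existence of a certificate $g$ of degree $\le k$ with $z^nw^m+fg\in F[z]+F[w]$ is expressed as a rank-deficiency condition on an explicit matrix (the degree-truncated map $h\mapsto z^nw^m+fh$ followed by the projection onto monomials divisible by $zw$) whose entries are integer linear combinations of the coefficients $f_{ij}$; rank deficiency is the vanishing of a family of minors $S_{n,m,k}\subset\mathbb{Z}[f_{ij}]$. These are \emph{closed} conditions, and since $f$ is KS over $F$, they hold as identities among elements of the ring $S$ generated over $\mathbb{Z}$ by the nonzero coefficients of $f$ together with their inverses. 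Hence \emph{every} ring homomorphism $S\to\mathbb{F}_p$ (or into a number field) preserves all of these identities simultaneously --- the infinitude of conditions costs nothing, because homomorphisms preserve polynomial identities --- while the support of $f$ is preserved because its nonzero coefficients are units of $S$. The existence of homomorphisms $S\to\mathbb{F}_p$ for infinitely many primes $p$ is then quoted from the reference [VWW], and no Chebotarev argument or residue-degree-one analysis is needed. In short: where your formulation (surjectivity of $\pi$, i.e.\ vanishing of cokernel fibers) produces infinitely many \emph{open} requirements that must be intersected --- which is exactly where you get stuck --- the paper's formulation produces infinitely many \emph{closed} requirements already satisfied at the given coefficients, and these specialize for free. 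If you want to salvage your module-theoretic approach, you would have to either prove the uniform annihilator statement (a genuinely open-ended task) or switch to the determinantal encoding, at which point you have reproduced the paper's argument.
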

\begin{proof}
For $n, m\geq 1$, let $g_{n,m}\in F[z, w]$ be such that $z^nw^m+fg_{n,m}\in F[z]+F[w].$ Let $deg(g_{n, m})=a_{n,m}.$
Put $b_{nm}=a_{n,m}+deg(f)+n+m.$ Given a natural number $l,$ we denote by $F[z, w]_l$ the $F$-space of polynomials of the total degree $\leq l.$
 Let $\phi_{n,m}: F[z, w]_{a_{nm}}\to zwF[z, w]_{b_{nm}}$
be the projection of the $F$-linear map $h\to z^{nm}+ fh$ (the projection discarding all monomials not containing both $z,w$).
Thus $\phi_{n,m}(g_{n,m})=0,$ so $\phi_{n, m}$ is not injective. Viewing $\phi_{n,m}$ as a matrix with respect to
the monomial basis, its coefficients
clearly belong to integer span of coefficients of $f$ and 1. Therefore, the fact that this matrix has rank less than dim $F[z, w]_{a_{nm}}=d(n,m)$
is equivalent to the vanishing of determinants of all $d(n,m)$ by $d(n,m)$ minors. Each such minor is a polynomial in 
$\mathbb{Z}[f_{i,j}],$ where $f_{i,j}$ is the coefficient of $z^{i}w^j$ in $f.$ 

To summarize, for each triple $n, m, k,$ we have constructed a family of polynomials $S_{n,m,k}\subset \mathbb{Z}[f_{ij}]$ with
the following property.
There exists $g\in F[z, w]$ of degree at most $k$ so that $gf+z^nw^m\in F[z]+F[w]$ if and only if all polynomials in $S_{n, m, k}$ vanish on coefficients of $f.$
Let $S$ denote the ring generated over $\mathbb{Z}$ by all nonzero coefficients of $f$ and their inverses. It is known that
for infinitely many primes $p$, there exists ring homomorphisms $S\to \mathbb{F}_p$ \cite{VWW}. Denoting the image of $f$ by $\bar{f} \in \mathbb{F}_p[z, w],$
we obtain that $\bar{f}$ is a KS-polynomial, moreover $Supp(\bar{f})=Supp(f).$ 
Similarly, we may replace $F$ by a number field.

\end{proof}

The next observation is crucial.
\begin{lemma}\label{crucial}

Let $f\in F[z, w]$ be a KS-polynomial. Let $(a, b)\in \bar{F}^2$ be a zero of $f.$
Then either $a\in F[b],$ or $b\in F[a].$

\end{lemma}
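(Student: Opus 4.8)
The plan is to mimic the structure of the proof of Theorem \ref{algebraic}, replacing the ring of functions on $\Gamma$ with a residue field obtained by evaluating at the point $(a,b)$. First I would unwind the KS-property: for every $\phi\in F[z,w]$ there exist $g_1(z)\in F[z]$ and $g_2(w)\in F[w]$ with $\phi-g_1(z)-g_2(w)=f\psi$ for some $\psi\in F[z,w]$. Since $f(a,b)=0$, evaluating this identity at $(z,w)=(a,b)$ kills the right-hand side, giving $\phi(a,b)=g_1(a)+g_2(b)\in F[a]+F[b]$. As $\phi$ ranges over all of $F[z,w]$, the values $\phi(a,b)$ range over all of $F[a,b]$, so this shows the inclusion $F[a,b]\subseteq F[a]+F[b]$; the reverse inclusion is trivial, hence
$$F[a,b]=F[a]+F[b].$$

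Next I would record the algebraic setup that makes Proposition \ref{key} applicable. Because $(a,b)\in\bar{F}^2$, both $a$ and $b$ are algebraic over $F$. Therefore $F[a]=F(a)$ and $F[b]=F(b)$ are fields, finite-dimensional over $F$, and likewise $A:=F[a,b]=F(a,b)$ is a field and a finite-dimensional $F$-algebra. Setting $F_1=F[a]$ and $F_2=F[b]$, these are subfields of $A$ containing $F$, and the displayed identity reads $A=F_1+F_2$.

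Finally I would invoke Proposition \ref{key} directly: it yields $A=F_1$ or $A=F_2$. In the first case $b\in A=F[a]$, and in the second case $a\in A=F[b]$, which is exactly the dichotomy asserted. I do not expect any genuine obstacle here; the only point requiring care is the justification that $F[a],F[b],F[a,b]$ are honest \emph{fields} finite over $F$ (so that Proposition \ref{key} applies verbatim), but this is immediate from $a,b$ being algebraic over $F$. The conceptual content is entirely carried by the observation that passing to the residue field at $(a,b)$ converts the congruence-mod-$f$ statement of the KS-property into a clean additive decomposition $A=F_1+F_2$ of a field.
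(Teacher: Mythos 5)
Your proof is correct and is essentially the paper's own argument: the paper likewise evaluates the KS-decomposition at the zero $(a,b)$ to obtain $F[a,b]=F[a]+F[b]$ and then invokes Proposition \ref{key}. The only difference is that you spell out the evaluation step and the field-theoretic justifications that the paper leaves implicit.
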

\begin{proof}
Let $A$ denote $F[a, b].$ Thus, $A$ is a finite extension of $F$. Since $f$ is a KS-polynomial, we may conclude that $A=F[a]+F[b].$
Since both $F[a], F[b]$ are finite field extensions of $F$, we are done by Proposition \ref{key}.

\end{proof}

Now we can easily prove Theorem \ref{KS-theorem} as follows. In view of Lemma \ref{crucial}, it suffices to
find $a, b\in \bar{F}$ roots of $h_1, h_2$ respectively so that
$F[a], F[b]$ do not contain each other.
 If $\deg(h_1)$ does not divide $\deg(h_2),$ since 
 $$\dim_FF[a]=\deg(h_1), \dim_FF[b]=\deg(h_2),$$ 
 we conclude that $F[a]$ is not a subfield of $F[b].$
Suppose that  $\deg(h_1)=\deg(h_2)$ and $F[t]/(h_1)$ is not a splitting field over $F.$
Then there exist $a, a'\in\bar{F}$ roots of $h_1,$ such that $a'\notin F(a).$ Then for any $b\in\bar{F}$ that
is a root of $h_2$, we have that $F(b)\neq F(a)$ or $F(b)\neq F(a')$. Let $F(a)\neq F(b).$ Since 
$$[F(a):F]=\deg(h_1)=\deg(h_2)=[F(b):F],$$
we conclude that $F(a), F(b)$ do not contain each other, as desired.
Finally, let $a, b\in \mathbb{Z}$ be square-free odd integers, $|a|, |b|>1.$
 Then it follows from the Eisenstein's irreducibily criterion that $z^n-a, w^n-b$ are irreducible over $\mathbb{Q}(i).$
 The splitting field of $z^n-a$ contains $\mathbb{Q}(i)(\xi_n)$, where $\xi_n$ is a primitive $n$-th root of unity.
 So,  $\mathbb{Q}(i)(\xi_n)=\mathbb{Q}(\xi_m)$, where $m=lcm(4,n)$. So 
 $$[\mathbb{Q}(i)(\xi_n): \mathbb{Q}(i)]=\phi(m)/2.$$
It is clear that $\phi(m)/2$ cannot divide $n$, so $\mathbb{Q}(i)(\xi_n)$ is not contained
in $\mathbb{Q}(i)(a^{\frac{1}{n}})$ and we are done.



We can easily derive Theorem \ref{KH-main} from Theorem \ref{algebraic}. Since 
$$r^4-4(f+\bar{g})r^2+(f-\bar{g})^2=(r+\sqrt{f}+\sqrt{\bar{g}})(r+\sqrt{f}-\sqrt{\bar{g}})(r-\sqrt{f}+\sqrt{\bar{g}})(r-\sqrt{f}-\sqrt{\bar{g}})$$
it follows that $\partial\Omega$ can be written as a finite union $\cup_{i=1}^m\Gamma_i,$ so that for each $\Gamma_i$ there exists
a harmonic algebraic function of degree 2 vanishing on it. So, by Theorem \ref{algebraic}, there exists a polynomial $f_i\in\mathbb{C}[z, \bar{z}]$ 
of order at most
2 in both $z, \bar{z}$ vanishing on $\Gamma.$ In fact, such a polynomial must have total degree at most 2. Indeed, since
$$z\bar{z}+h(z)+\overline{h(z)}|_{\partial\Gamma}=0$$ for some $h\in\mathbb{C}[z],$  we can replace $f_i$ by gcd of $f_i$ and $z\bar{z}+h(z)+\overline{h(z)}$
which will have the total degree at most 2. Thus, $\partial\Omega$ is a union of finitely many curves each of them being either a line, ellipse, parabola or hyperbola. If an infinite subset of $\partial\Omega$ is a zero of an irreducible quadratic function $\phi$, then 
$\phi||z|^2+h$ for some real harmonic polynomial $h.$ Therefore $h$ is at most quadratic, implying that $\partial\Omega$ is contained in the
zero set of an irreducible quadratic polynomial.
So, $\Omega$ must be an ellipse. Thus, the only case left to consider is when $\Omega$ a polygon.
Then we are done by \cite{R2}

\begin{acknowledgement}
The paper owes its existence to  Steven Bell's Shoemaker lecture series
given at University of Toledo in 2015. I am very grateful to Dima Khavinson for many useful comments.
\end{acknowledgement}


\begin{thebibliography}{}



\bibitem[BEKS]{BEKS}

S.~Bell, S. P.~ Ebenfelt, D.~Khavinson, H.~Shapiro,
{\em On the classical Dirichlet problem in the plane with rational data},
J. Anal. Math. 100 (2006), 157--190.

\bibitem[CS]{CS}
M. Chamberland, D. Siegel, {\em Polynomial solutions to Dirichlet problems}, Proc. Amer. Math. Soc. 129 (2001) 211--217.





\bibitem[KS]{KS}
D.~Khavinson ; H.~Shapiro,
{\em Dirichlet's problem when the data is an entire function},
Bull. London Math. Soc. 24 (1992), no. 5, 456--468. 


\bibitem [LR]{LR}
E.~Lundberg, H.~Render, 
{\em The Khavinson-Shapiro conjecture and polynomial decompositions}
J. Math. Anal. Appl. 376 (2011), no. 2, 506--513.


\bibitem[R1]{R1}
H.~Render, 
{\em Real Bargmann spaces, Fischer decompositions, and sets of uniqueness for polyharmonic functions},
Duke Math. J. 142 (2008), no. 2, 313--352. 

\bibitem[R2]{R2}
H.~Render, 
{\em The Khavinson-Shapiro conjecture for domains with a boundary consisting of algebraic hypersurfaces},Complex analysis and dynamical systems VII, 283--290,
Contemp. Math., 699, Amer. Math. Soc., Providence, RI, 2017.

\bibitem[VWW]{VWW}
V.~Vu, M.~Wood, P.~Wood,
{\em Mapping incidences},  J. Lond. Math. Soc. (2) 84 (2011), no. 2, 433--445. 

\end{thebibliography}
\end{document}